\def\be#1\ee{\begin{equation}#1\end{equation}}
\theoremstyle{plain}
\newtheorem{Thm}{Theorem}
\newtheorem{proposition}{Proposition}[section]
\newtheorem{lemma}[proposition]{Lemma}
\newtheorem*{lemma*}{Lemma}
\newtheorem*{Theorem*}{Theorem}
\theoremstyle{definition}
\newtheorem{remark}{Remark}[section]
\newtheorem*{remark*}{Remark}
\newtheorem*{example*}{Example}
\def\printname#1{
    \if\draft
        \smash{\makebox[0pt]{\hspace{-0.5in}
            \raisebox{8pt}{\tt\tiny #1}}}
    \fi
}
\newlength{\standardunitlength}
\long\def\@makecaption#1#2{%
     \vskip 10pt

\setbox\@tempboxa\hbox{
       \small\sf{\bfcaptionfont #1. }\ignorespaces #2}%
     \ifdim \wd\@tempboxa >\captionwidth {%
         \rightskip=\@captionmargin\leftskip=\@captionmargin
         \unhbox\@tempboxa\par}%
       \else
         \hbox to\hsize{\hfil\box\@tempboxa\hfil}%
     \fi}
\font\bfcaptionfont=cmssbx10 scaled \magstephalf
\newdimen\@captionmargin\@captionmargin=2\parindent
\newdimen\captionwidth\captionwidth=\hsize
\newcommand{\tr}{\operatorname{tr}}
\newcommand{\im}{\operatorname{Im}}
\newcommand{\cC}{{\mathcal C}}
\newcommand{\Tor}{{\operatorname{\mathfrak {tor}}}}
\newcommand{\beq}{\begin{equation}}
\newcommand{\eeq}{\end{equation}}
\newcommand{\bThm}{\begin{Thm}}
\newcommand{\eThm}{\end{Thm}}
\def\BZ{\mathbb Z}
\def\BQ{\mathbb Q}
\def\BR{\mathbb R}
\def\BC{\mathbb C}
\def\DD{\mathcal D}
\def\la{\langle}
\def\ra{\rangle}
\def\vol{\operatorname{vol}}
\newcommand{\bproof}{\begin{proof}}
\newcommand{\blemma}{\begin{lemma}}
\newcommand{\eproof}{\end{proof}}
\newcommand{\elemma}{\end{lemma}}
\def\im{\operatorname{Im}}
\def\rk{\operatorname{rk}}
\def\Hom{\mathrm{Hom}}
\def\al{\alpha}
\def\Z{\BZ}
\def\bk{\mathbf{k}}
\def\tr{\mathrm{tr}}
\def\bz{\mathbf z}
\newcommand\no[1]{}
\begin{document}

\title[Torsion  Growth]{Growth of Regulators in finite abelian coverings}

\author{Thang T. Q. L\^e}
\address{Department of Mathematics, Georgia Institute of Technology,
Atlanta, GA 30332--0160, USA }
\email{letu@math.gatech.edu}

{ \noindent
\thanks{The author was supported in part by National Science Foundation.} 
\thanks{ MSC: 54H20, 56S30,  57Q10, 37B50, 37B10, 43A07.
}
}
\begin{abstract}
We show that the regulator, which is the difference between the homology torsion and the combinatorial Ray-Singer torsion, of finite abelian coverings of a fixed complex
has sub-exponential growth rate.

\end{abstract}

\maketitle

\section{Introduction}
\subsection{Based free complex over group ring and its quotients} Suppose $\pi$ is a finitely presented  group and  $\BZ[\pi]$ is the group ring of $\pi$ over the ring $\BZ$ of integers.

Let $\cC$ be a finitely-generated   based
free $\BZ[\pi]$-complex
$$
0 \to C_m \overset {\partial_m}\longrightarrow C_{m-1} \overset {\partial_{m-1}}\longrightarrow C_{m-2} \dots \overset {\partial_2}\longrightarrow C_{1}\overset {\partial_1}\longrightarrow C_{0}\to 0.
$$
Here ``based free" means each $C_k$ is a free $\BZ[\pi]$-module equipped with a preferred base.

For a normal subgroup $\Gamma  \lhd \pi$ let $\cC_\Gamma := \BZ[\pi/\Gamma ] \otimes _{\BZ[\pi]} \cC$. Assume  that the  index $[\pi:\Gamma ]$ is finite. Then $\cC_\Gamma $ is a finitely-generated based
free $\BZ$-complex, where the preferred base of $ \BZ[\pi/\Gamma ] \otimes_{\BZ[\pi]} C_k$ is defined using the one of $C_k$ in a natural way.

A prototypical case is the following. Suppose $\tilde X \to X$ is a regular covering with $\pi$ the group of deck transformations
and $X$ a finite CW-complex. Choose a lift in $\tilde X$ of  each cell of $X$.
Then the CW-complex $\cC$ of $\tilde X$ induced from that of $X$ is a finitely-generated   based
 free $\BZ[\pi$-complex. For a normal subgroup $\Gamma \lhd \pi$, $\cC_\Gamma$ is
the CW-complex of the covering $X_\Gamma$, corresponding to the group $\Gamma$, and $H_k(\cC_\Gamma)$ is the $k$-th homology of the covering $X_\Gamma$. Usually  interesting invariants
do not depend on the choice of the lifts of cells of $X$.

\subsection{Two torsions}
We can define two torsions of the quotient complex $\cC_\Gamma $, the homology torsion $\tau^{H}(\cC_\Gamma )$ and the combinatorial Ray-Singer torsion $\tau^{RS}(\cC_\Gamma )$, as follows.
The homology torsion is
$$ \tau ^H(\cC_\Gamma):=\left(  {\prod_k}^* |\Tor_\BZ(H_k(\cC_\Gamma))| \right)^{-1}\in  \BR_+, $$
where $\Tor_\BZ(M)$ is the $\BZ$-torsion part of the finitely-generated abelian group $M$, and ${\prod_k}^* a_k$ is the alternating product
$$ {\prod_k}^* a_k = {\prod_k} a_k^{(-1)^k}.$$

The  Ray-Singer torsion of $\cC_\Gamma$ is
$$\tau^{RS}(\cC_\Gamma)= {\prod_k}^* {\det}' (\partial_k) \in \BR_+,$$
where $\det '$ is the geometric determinant 
of  linear maps between based Hermitian spaces. We recall the definition of $\det'$ in Section \ref{sec.volume}.

\subsection{Comparison: general question}
We want to compare the asymptotics of the two torsions as $\Gamma $ becomes ``thinner and thinner in $\pi$", so that $\pi/\Gamma$ approximates $\pi$  in the following sense.
A finite set $S$ of generators of $\pi$ defines a word length function $l_S$ (and hence a metric)  on $\pi$.  Define
$$ \la \Gamma  \ra := \min \{ l_S(x) \mid x \in \Gamma  \setminus \{e\}\}.$$
Here $e$ is the unit of $\pi$. 
In all what follow, statements do not depend on the choice of the generator set $S$, since the metrics of two different generator sets are quasi-isometric.

We are interested in the following question: Suppose $\cC$ is $L^2$-acyclic (see e.g. \cite{Luck_book}). Under what conditions does it hold that
\be
\lim_{\la \Gamma  \ra \to \infty, |\pi:\Gamma | < \infty}\frac{\ln (\tau^{H}(\cC_\Gamma )) -\ln  (\tau^{RS}(\cC_\Gamma ))   }{|\pi:\Gamma |} =0.
\label{eq.00}
\ee

The motivation of this question comes from the question \cite{Luck_book}: can one  approximate  $L^2$-torsions by finite-dimensional analogs? In some favorable conditions, one expects that the growth rate of each of $\tau^H$ and $\tau^{RS}$ is the $L^2$-torsion, hence they  must be the same.
\begin{remark} (a)
If $\{\Gamma _n, n=1,2, \dots\}$ is a sequence of exhausted nested normal subgroups of $\pi$, i.e. $\Gamma _{n+1} \subset \Gamma _n$ and $\cap_n \Gamma _n =\{e\}$, then $\lim_{n\to \infty } \la \Gamma _n \ra =\infty$.
The limit in \eqref{eq.00} is more general (stronger) than the limit of an exhausted nested sequence, as we don't have ``nested" property.

(b) There exists a sequence $\Gamma_n \lhd \pi$ such that $\lim_{n\to \infty } \la \Gamma _n \ra =\infty$ if and only if $\pi$ is residually finite. Hence, the left hand side of \eqref{eq.00} makes sense
only when $\pi$ is residually finite.

(c) Define $\tr_\pi(x) = \delta_{x,e}$ for $x\in \pi$. This functional trace is the base for the definition of many combinatorial $L^2$-invariants.
For a fixed $x \in \pi$, we have 
$$ \lim_{\la \Gamma  \ra \to \infty} \tr_{\Gamma}x = \tr_\pi(x).$$
This is the reason why one expects that as  ${\la \Gamma  \ra \to \infty}$, many $L^2$-invariants (under some technical conditions) can be approximated by the corresponding invariants of $\pi/\Gamma$.
\end{remark}

\subsection{Main results}

The main result of the paper treats the case $\pi= \BZ^n$.

\begin{Thm}
\label{thm.01}
Suppose  $\cC$ is an $L^2$-acyclic  finitely generated based free $\BC[\BZ^n]$-complex. Then \eqref{eq.00}, with $\pi=\BZ^n$, holds true.
\end{Thm}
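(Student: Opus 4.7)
The plan is to decompose everything via characters of the finite quotient $\BZ^n/\Gamma$ and then analyze the ratio $\tau^H/\tau^{RS}$, which is the ``regulator'' of the title. Because $\Gamma$ has finite index, the dual group $\widehat{\BZ^n/\Gamma}$ consists of finitely many roots-of-unity points on the torus $T^n := (S^1)^n$, and after tensoring with $\BC$ one has a Fourier decomposition
$$\cC_\Gamma \otimes_{\BZ}\BC \;=\; \bigoplus_{\chi\in\widehat{\BZ^n/\Gamma}} \cC_\chi,$$
where the differentials of $\cC_\chi$ are $A_k(\chi)$, the evaluations at $\chi\in T^n$ of the Laurent-polynomial matrices representing $\partial_k$. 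The Ray--Singer torsion factors as $\ln\tau^{RS}(\cC_\Gamma) = \sum_\chi \ln\tau^{RS}(\cC_\chi)$. The $L^2$-acyclicity of $\cC$ is equivalent to the statement that a certain alternating product of ideals of minors of the $A_k$ is nonzero, so there is a proper algebraic subset $V\subset T^n$ such that $\cC_\chi$ is acyclic for every $\chi\notin V$.

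The second step is the general identity, valid for any finite based free $\BZ$-complex $D_*$,
$$\ln \tau^{RS}(D_*) - \ln \tau^{H}(D_*) \;=\; \sum_k (-1)^k \ln R_k(D_*),$$
where $R_k(D_*)$ is the covolume of the lattice $H_k(D_*)_{\mathrm{free}}$ inside $H_k(D_*\otimes\BR)$, equipped with the inner product induced (via harmonic representatives) from the standard inner product on the chain groups. Applying this to $D_*=\cC_\Gamma$ reduces Theorem~\ref{thm.01} to proving
$$\frac{1}{[\BZ^n:\Gamma]}\,\ln R_k(\cC_\Gamma) \;\longrightarrow\; 0 \qquad\text{as}\qquad \la\Gamma\ra\to\infty.$$

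To bound $R_k(\cC_\Gamma)$ I would combine two ingredients. First, the rank $r_k$ of the free part of $H_k(\cC_\Gamma)$ is bounded by $\#\bigl(\widehat{\BZ^n/\Gamma}\cap V\bigr)$; since $V$ is a proper algebraic subset of $T^n$ and $\la\Gamma\ra\to\infty$ prevents the characters from concentrating on any prescribed lower-dimensional subtorus, a standard point-counting argument yields $r_k = o([\BZ^n:\Gamma])$. Second, I would produce an integer basis of $H_k(\cC_\Gamma)_{\mathrm{free}}$ whose cycle representatives have $\ell^2$-norm polynomial in $[\BZ^n:\Gamma]$; Hadamard's inequality then gives $\ln R_k \leq r_k\cdot O(\log[\BZ^n:\Gamma])$, which is $o([\BZ^n:\Gamma])$.

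The hard part is the second ingredient. Producing polynomially bounded basis cycles in the free homology forces one to control, simultaneously for all characters close to the bad locus $V$, how deeply the values of the relevant minors of the $A_k$ can approach zero. The essential technical input is a Lawton--Boyd type quantitative estimate: for any nonzero $f\in\BZ[\BZ^n]$, the number of characters $\chi$ of $\BZ^n/\Gamma$ for which $|f(\chi)| < \exp(-\varepsilon\,[\BZ^n:\Gamma])$ is subexponential in $[\BZ^n:\Gamma]$ once $\la\Gamma\ra$ is sufficiently large. Converting such an estimate into bounds on the Smith-normal-form data of the integer matrices representing the $\partial_k$ in $\cC_\Gamma$, and thereby into a bound on heights of basis cycles, is the technical heart of the proof and the main obstacle I expect to encounter.
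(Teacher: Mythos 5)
Your first two steps match the paper exactly: the Fourier decomposition over $\hat A_\Gamma=G(\Gamma)$, the identity $\ln\tau^{RS}-\ln\tau^H=\sum_k(-1)^k\ln R_k$ (this is \eqref{eq.e55}), and the reduction of Theorem~\ref{thm.01} to the single assertion $\ln R_k(\cC_\Gamma)/[\BZ^n:\Gamma]\to0$, which is Theorem~\ref{thm.01a}. The gap is in how you propose to bound $R_k$, and it is not merely a technical loose end --- the route you sketch would give a bound that is too weak to close the argument.

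You want integral cycle representatives of $\ell^2$-norm polynomial in $|A|:=[\BZ^n:\Gamma]$, apply Hadamard to get $\ln R_k\le r_k\cdot O(\ln|A|)$, and finish using $r_k=o(|A|)$. But $r_k=o(|A|)$ together with $\ln R_k\le r_k\cdot O(\ln|A|)$ only gives $\ln R_k=o(|A|\ln|A|)$, not $o(|A|)$, and under the hypothesis $\la\Gamma\ra\to\infty$ alone the quantity $r_k\ln|A|/|A|$ can genuinely blow up. Take $n=2$, $\cC\colon 0\to\BZ[\BZ^2]\xrightarrow{\ t_1-1\ }\BZ[\BZ^2]\to0$ (which is $L^2$-acyclic), and $\Gamma=N\BZ\oplus M\BZ$ with $M$ super-polynomial in $N$, say $M=\lfloor e^{N^2}\rfloor$. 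Here $\la\Gamma\ra=N\to\infty$, $|A|=NM$, the bad torsion coset is $\{1\}\times\cU$ with stabilizer $B=(\Gamma+\Lambda)/\Gamma$, $\Lambda=\BZ\times 0$, $|B|=N$, and $\rk_\BZ H_0(\cC_\Gamma)=|A|/N=M$. Then $r_0\ln|A|/|A|\asymp\ln|A|/N\asymp N\to\infty$. So even granting the (unproved) polynomial-height cycles, your estimate does not yield \eqref{eq.007}. The Lawton--Boyd-type input you mention --- lower bounds on $|f(\chi)|$ off the zero set --- is not used in the paper and does not repair this; what is actually needed is an exponent-\emph{base} improvement, not a bound on how small nonzero values can get.

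The paper avoids cycle representatives entirely. Proposition~\ref{pro.01} gives $1/\tilde R_k\le R_k\le\tilde R_k$ with $\tilde R_k=\vol(\ker D_k)$, the covolume of the \emph{harmonic} lattice, so it suffices to control $\vol(\ker\DD_\Gamma)$ for the integral Laplacian $\DD=\partial_k^*\partial_k+\partial_{k+1}\partial_{k+1}^*$. By \eqref{eq.103} and integrality, $\vol(\ker\DD_{\Gamma,X})\le\det'(\DD_{\Gamma,X})\vol(\al_{\Gamma,X})$, which reduces everything to bounding the covolume of the span $\al_{\Gamma,X}$ of the bad idempotents $e_\bz$. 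Laurent's torsion-point theorem (Proposition~\ref{p.10}) confines the bad $\bz$ to finitely many torsion cosets $X=\bu\,G(\Lambda)$ with $\Lambda\neq0$, and the decisive structural fact (Lemma~\ref{lem.a1}) is that the orthogonal projection onto $\al_{\Gamma,X}$ is the idempotent $N_X$ supported on $B=(\Gamma+\Lambda)/\Gamma$, with $|B|N_X\in\BZ[B]$. This yields $\vol(\al_{\Gamma,X})\le(r|B|)^{\,r|A|/|B|}$: the exponent is $\dim\al_{\Gamma,X}$, but the base is $O(|B|)$, \emph{not} $O(|A|)$. Since $|B|\ge\la\Gamma\ra/|x|$ for a fixed $x\in\Lambda\setminus\{0\}$ and $\ln|B|/|B|\to0$, one gets $\ln\vol(\ker\DD_{\Gamma,X})/|A|\le r\ln(c|B|)/|B|\to0$ uniformly in how fast $|A|$ grows. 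That base-$|B|$ rather than base-$|A|$ gain is precisely what your Hadamard estimate lacks, and it is the point you would need to discover to make your sketch work.
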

We will not give the definition of $L^2$-acyclicity. Instead,  for $\pi=\BZ^n$,  we will use an equivalent definition \cite{Elek,Luck_book}: the $L^{(2)}$ homology $H_k^{(2)}(\cC)$ vanishes if and only if
$ H_k(\cC\otimes _{\BZ[\BZ^n]} F)=0$. Here $F$ is the fractional field of the commutative domain $\BZ[\BZ^n]$.

It is expected that \eqref{eq.00} holds for  a large class of residually non-abelian finite groups (like hyperbolic groups), but a proof is probably very difficult.
\begin{remark}
(a) Our result does not imply that 
$$ \lim_{\la \Gamma  \ra \to \infty, |\pi:\Gamma | < \infty}\frac{\ln (\tau^{H}(\cC_\Gamma ))   }{|\pi:\Gamma |}=    
\lim_{\la \Gamma  \ra \to \infty, |\pi:\Gamma | < \infty}\frac{\ln  (\tau^{RS}(\cC_\Gamma ))  }{|\pi:\Gamma |},  
$$
as we cannot prove the existence of each of the limits. As mentioned above, for a large class of (abelian and non-ablian groups) and maybe under some restrictions
on $\Gamma$, one expects that both limits exist, are the same, and equal to the $L^2$-torsion of $\cC$. This holds true for $\pi = \BZ$, with no restrictions on $\Gamma$, see \cite{GS,Riley,Luck_book}. Even for the case
$\pi= \BZ^2$ and $\cC$ is a $2$-term  complex $ 0 \to C_1 \to C_0 \to 0$, so that only $H_0(\cC)$ is non-trivial, there is still no proof of the conjecture that the $L^2$-torsion
is equal to either of the above limits. For results and discussions of this conjecture, see \cite{Luck_book,Luck,BV,Le_slides,Le_AHG,FJ,SW}.

(b) It should be noted that the exact calculation of the torsion part of the homology of finite coverings, even in the abelian case, is very difficult, see  \cite{HS,MM,Porti} for some partial results.
\end{remark}

\subsection{Refinement} Suppose $\pi$ is residually finite and the $L^2$-homology $H^{(2)}_k(\cC)=0$ for some $k$. For any normal subgroup $\Gamma \lhd \pi$ of finite index, the
homology group $H_k(\cC_\Gamma)$ is a finitely-generated abelian group. Because $H^{(2)}_k(\cC)=0$ one should expect that $H_k(\cC_\Gamma)$ is negligible. In fact,
a theorem of L\"uck \cite{Luck} (and Kazhdan for this case) says that
$$
\lim_{\la \Gamma  \ra \to \infty, |\pi:\Gamma | < \infty}\frac{ \rk_\BZ  H_k(\cC_\Gamma)  }{|\pi:\Gamma |} =0.
$$
This means  the free part $H_k(\cC_\Gamma)_{free}$ of $H_k(\cC_\Gamma)$ is small compared to the index. There is another measure of the free part $H_k(\cC_\Gamma)_{free}$, denoted by $R_k(\cC_\Gamma)$ and
called the regulator, or volume, see \cite{BV}  and Section  \ref{sec.reg}. Another expression of the fact that $H_k(\cC_\Gamma)_{free}$ is small compared to the index is expressed in the following statement, which complements
the result of Kazhdan-L\"uck.
\begin{Thm} \label{thm.01a}
Suppose  $\cC$ is a finitely generated based free $\BZ[\pi]$-complex with $\pi=\BZ^n$ and $H_k^{(2)}(\cC)=0$ for some index $k$. Then
\be
\lim_{\la \Gamma  \ra \to \infty, |\pi:\Gamma | < \infty}\frac{ \ln \vol( H_k(\cC_\Gamma)_{free})  }{|\pi:\Gamma |} =0.
\label{eq.007}
\ee
\end{Thm}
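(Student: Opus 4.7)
The plan is to bound $\ln R_k(\cC_\Gamma)$ individually via the Pontryagin/Fourier decomposition available for $\pi = \BZ^n$, using Mahler-measure estimates of the same type that drive Theorem \ref{thm.01}.

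First I would set up the decomposition. Writing $G = \BZ^n/\Gamma$ and $\widehat{G} \subset T^n$ for its Pontryagin dual, one has $\cC_\Gamma \otimes \BC = \bigoplus_{\chi \in \widehat{G}} \cC(\chi)$, and the combinatorial Laplacian $\Delta_k := \partial_{k+1}\partial_{k+1}^* + \partial_k^*\partial_k$ splits as $\Delta_k = \bigoplus_\chi \Delta_k(\chi)$, where each $\Delta_k(\chi)$ is the value at $\chi$ of a matrix-valued Laurent polynomial over $\BZ$. Via the harmonic-representative identification $H_k(\cC_\Gamma) \otimes \BR \cong \ker \Delta_k$, the group $H_k(\cC_\Gamma)_{\mathrm{free}}$ becomes a lattice in $\ker \Delta_k = \bigoplus_\chi \ker \Delta_k(\chi)$, and $R_k(\cC_\Gamma)$ is its covolume. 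The hypothesis $H_k^{(2)}(\cC) = 0$ forces $\det \Delta_k(x)$ to be a non-zero element of $\BZ[\BZ^n]$, so $\ker \Delta_k(\chi) = 0$ for $\chi$ outside a proper analytic subvariety $V \subset T^n$, and $\log|\det \Delta_k|$ is integrable on $T^n$ with finite Mahler measure.

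Second I would derive a two-sided bound $|\ln R_k(\cC_\Gamma)| = o(|\pi:\Gamma|)$. For the upper bound, I would apply Hadamard's inequality to a $\BZ$-basis $\{e_i\}$ of $H_k(\cC_\Gamma)_{\mathrm{free}}$ with small harmonic representatives: $R_k(\cC_\Gamma) \leq \prod_i \|h(e_i)\|$, and via the Parseval identity $\|h(e_i)\|^2 = \sum_\chi \|h(e_i)(\chi)\|^2$ this converts into a Riemann sum $\sum_{\chi \in \widehat{G}} F_+(\chi)$, where $F_+$ is an integrable function on $T^n$ related to the reciprocal spectral gap of $\Delta_k(\chi)$. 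For the lower bound, Minkowski's first theorem on $H_k(\cC_\Gamma)_{\mathrm{free}} \subset \ker \Delta_k$ gives $R_k(\cC_\Gamma) \geq \lambda_1^{b_k}/(\mathrm{const})^{b_k}$, where $\lambda_1$ is the length of the shortest non-zero vector; the latter is controlled below by the reciprocal of a ``denominator'' arising from $\det \Delta_k(\chi)$, producing a similar Riemann sum $\sum_\chi F_-(\chi)$. Equidistribution of $\widehat{G}$ in $T^n$ then gives $(1/|\pi:\Gamma|) \sum_\chi F_\pm(\chi) \to \int_{T^n} F_\pm$, so both bounds grow sub-exponentially in $|\pi:\Gamma|$; combined with the Kazhdan--L\"uck estimate $b_k = o(|\pi:\Gamma|)$, this gives the theorem.

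The main obstacle will be controlling the accumulation of contributions from characters $\chi \in \widehat{G}$ that lie close to the singular locus $V$: there $\det \Delta_k(\chi)$ is very small, making $F_\pm(\chi)$ very negative and threatening to dominate the Riemann sum. This is precisely the technical difficulty resolved in the proof of Theorem \ref{thm.01}, via a non-concentration estimate for $\widehat{G}$ on proper algebraic subvarieties of $T^n$ together with a Lehmer-type lower bound of the form $|\det \Delta_k(\chi)| \geq \exp(-|\pi:\Gamma|^{1-\epsilon})$ valid for all $\chi \in \widehat{G}$. Adapting these estimates from the torsion-growth setting of Theorem \ref{thm.01} to the regulator-growth setting of the present theorem is expected to constitute the bulk of the new technical work.
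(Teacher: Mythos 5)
The proposal has genuine gaps, and — importantly — inverts the logical structure of the paper. In the paper, Theorem \ref{thm.01} is deduced from Theorem \ref{thm.01a} via the Bergeron--Venkatesh identity \eqref{eq.e55}; so one cannot ``adapt the estimates from the proof of Theorem \ref{thm.01}'' to prove Theorem \ref{thm.01a}, since those estimates \emph{are} the proof of Theorem \ref{thm.01a}. You would need to supply them independently.

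More substantively, the analytic route you outline runs into two obstacles that the paper's algebraic approach is designed to avoid. First, the proposed lower bound on the shortest vector via a ``Lehmer-type'' estimate $|\det \Delta_k(\chi)| \geq \exp(-|\pi:\Gamma|^{1-\epsilon})$ uniformly over $\chi\in\widehat{G}$ is not a known result; obtaining such a uniform lower bound on values of an integer Laurent polynomial at torsion points of bounded index is a hard quantitative problem close in spirit to effective Bogomolov/Manin--Mumford, and no citation or argument is given. The paper sidesteps the need for any lower bound on $\lambda_1$ entirely: Proposition \ref{pro.01} gives $R_k \geq 1/\tilde R_k$ with $\tilde R_k = \vol(\ker D_k)$ by a duality argument, so only an \emph{upper} bound on $\vol(\ker D_k)$ is ever needed. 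Second, converting $\sum_{\chi\in\widehat{G}} F_\pm(\chi)$ into a Riemann sum requires controlling the contribution of characters near the singular locus $V$, and weak-$*$ equidistribution of $\widehat{G}$ on $T^n$ does not by itself control a sum of an unbounded integrand near its singularities; you acknowledge this but defer to a ``non-concentration estimate'' that is not supplied and is not in the paper. The paper replaces this analytic step by Laurent's theorem (Proposition \ref{p.10}): the torsion points on $Z=\{\det\DD=0\}$ lie on finitely many maximal torsion cosets $X_j$, each with a nontrivial associated lattice $\Lambda$, and for each such coset the number of characters of $\widehat{G}$ it contains is $|A|/|B|$ where $|B| = |(\Gamma+\Lambda)/\Gamma| \to \infty$ as $\la\Gamma\ra\to\infty$. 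This gives directly that $\dim\ker \DD_\Gamma = O(|A|/|B|)$ with explicit constants, and combined with the operator-norm bound $\|\DD_\Gamma\|\le kl\|\DD\|_1$ and the volume inequality \eqref{eq.103} yields $\vol(\ker\DD_\Gamma)\le (klr\|\DD\|_1|B|)^{r|A|/|B|}$, whose logarithm is $o(|A|)$ because $\ln|B|/|B|\to 0$. No equidistribution, no Mahler-measure integrals, and no Lehmer-type input are required.

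In short: your decomposition into characters and the identification of $R_k$ as a lattice covolume in $\ker\Delta_k$ match the paper's setup, but the core of the paper's proof is the Laurent/Manin--Mumford structure theorem on torsion cosets plus the duality bound of Proposition \ref{pro.01}, neither of which appears in your plan; the estimates you propose to substitute (equidistribution Riemann sums and a uniform Lehmer-type bound) are either circular or unproven.
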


One conjectures that  \eqref{eq.007} holds for a large class of groups including hyperbolic groups.
\begin{remark} In \cite{Ri}, it is 
  proved that there is a sequence $\Gamma_n$, with $\la \Gamma_n \ra \to \infty$, such that the limit on the left hand side of \eqref{eq.007} along
$\Gamma_n$ is 0. Theorem \ref{thm.01a} implies that the limit is 0 for {\em any} such sequence.
\end{remark}

\subsection{On the proofs} For the proofs we use tools in commutative algebras and algebraic geometry. In particular, we make essential use of the theory of
torsion points in $\BQ$-algebraic set (a simple version of the Manin-Mumford principle). We hope that the methods and results can be adapted to the case of  elementary amenable groups.

\subsection{Acknowledgements} I would like to thank M. Baker, H. Dao,  W. L\"uck, A. Thom, and U. Zannier  for helpful disussions.
\subsection{Organization of the paper} In Section \ref{sec.volume} we recall the notions of geometric determinant and volume. We discuss the relation between homology and
Ray-Singer torsions in Section \ref{sec.reg}. An overview of the theory of torsion points in algebraic set is given in Section \ref{sec.torsion1}.
Section \ref{sec.tech} contains a crucial growth estimate which is needed in the proofs of the main theorems, given in Section \ref{sec.proofs}.

\def\fh{\mathfrak h}
\def\cE{\mathcal E}
\section{Geometric determinant, lattices and volume in  based Hermitian spaces}
\label{sec.volume}
In this section we recall the definition of geometric determinant and basic facts about volumes of lattices in based Hermitian spaces.
\subsection{Geometric determinant}
 For a linear map $f: V_1\to V_2$, where each $V_i$ is a finite-dimensional Hermitian space the {\em geometric determinant} $\det'(f)$ is the product of all non-zero
singular values of $f$. Recall that $x\in \BR$ is singular value of $f$ if $x \ge 0$ and $x^2$ is an eigenvalue of $f^* f$. By convention $\det'(f)=1$ if $f$ is the 0 map.
Thus we always have $\det'(f) >0$.

Since the maximal singular value of  $f$ is the norm $||f||$, we have
\be
 {\det}'(f) \le ||f|| ^{\dim V_2} \qquad \text{if $f$ is non-zero.}
 \label{eq.103s}
 \ee

 \begin{remark}
The geometric meaning of ${\det}' f$ is the following. The map $f$ restricts to a linear isomorphism $f'$ from  $\im(f^*)$ to $\im(f)$, each is a Hermitian space.
Then $\det' f= |\det (f')|$, where the ordinary determinant $\det(f')$ is calculated using orthonomal bases of the Hermitian spaces.
\end{remark}

\subsection{Based Hermitian space and volume}
Suppose $W$ is a finite-dimensional {\em based Hermitian space}, i.e. a $\BC$-vector space equipped with an Hermitian product $(.,.)$ and a  preferred  orthonormal basis. The $\BZ$-submodule $\Omega
\subset W$ spanned by the basis is called the {\em fundamental lattice}.

For a $\BZ$-submodule (also called a {\em lattice}) $\Lambda  \subset W$ with $\BZ$-basis $v_1,\dots,v_l$ define
$$ \vol(\Lambda ) = |\det \left ( (v_i,v_j)_{i,j=1}^l \right)|^{1/2}.$$
By convention, the volume of the 0 space is 1. If $\Lambda\subset \Omega$, we say that $\Lambda$ is an {\em integral lattice}.
It is clear that  $\vol(\Lambda ) \ge 1$ if $\Lambda$ is an integral lattice.

For a $\BC$-subspace $ V \subset W$  the lattice $V^{(\BZ)}:=V \cap \Omega$ is called the {\em $\BZ$-support of $V$}. We define
$$ \vol(V) := \vol(V^{(\BZ)}).$$
A lattice $\Lambda  \subset \Omega$ is {\em primitive} if is cut out from $\Omega$ by some subspace, i.e. $\Lambda  = V^{(\BZ)}$ for some subspace $V \subset W$. By definition, any primitive lattice is integral.

As usual, we say that a subspace $V\subset W$ is {\em is defined over  $\BQ$} if it is defined by some linear equations with rational coefficients (using the coordinates in the preferred base). It is easy to see that $V$ is defined over $\BQ$ if and only if it is spanned by its $\BZ$-support.

Suppose $V_1, V_2$ are subspaces of $W$ defined over $\BQ$, and $f: V_1 \to V_2$ is a $\BC$-linear map. We say that $f$ is {\em integral} if $f(V_1^{(\BZ)}) \subset V_2^{(\BZ)}$.

We summarize some well-known properties of volumes of lattices (see e.g. \cite{Bertrand}).

\begin{proposition} Suppose $V_1, V_2$ are subspaces of $W$ defined over $\BQ$ of a based Hermitian space $W$ and $f: V_1 \to V_2$ is an integral, non-zero  $\BC$-linear map. Then

\begin{align}
\vol(V_1+V_2) & \le \vol(V_1)\, \vol(V_2)  \label{eq.103a}\\
 \vol(\ker f)  \vol [f (V_1^{(\BZ)})] & = {\det}' (f) \, \vol(V_1).
\label{eq.103}
\end{align}

\label{pro.volume}
\end{proposition}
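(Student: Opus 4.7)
The proof splits naturally into the two assertions: (2) is the key identity and follows from the singular value decomposition of $f$, while (1) is a Hadamard-type sub-multiplicativity of lattice volumes reduced by an elementary geometric-of-numbers step.

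For (2), I first use that $f$ is integral and that $V_1,V_2$ are $\BQ$-defined to conclude that $\ker f\subset V_1$ is $\BQ$-defined and that $(\ker f)^{(\BZ)}=\ker f\cap V_1^{(\BZ)}$. The short exact sequence
$$
0 \longrightarrow (\ker f)^{(\BZ)} \longrightarrow V_1^{(\BZ)} \stackrel{f}{\longrightarrow} f(V_1^{(\BZ)}) \longrightarrow 0
$$
consists of free abelian groups (the right term is a subgroup of $\Omega$), so it splits: choose a $\BZ$-basis $v_1,\dots,v_s$ of $(\ker f)^{(\BZ)}$ and extend it to a $\BZ$-basis $v_1,\dots,v_m$ of $V_1^{(\BZ)}$; then $f(v_{s+1}),\dots,f(v_m)$ is a $\BZ$-basis of $f(V_1^{(\BZ)})$. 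Let $P$ denote orthogonal projection of $V_1$ onto $(\ker f)^\perp$. A direct wedge-product computation gives
$$
\vol(V_1) \;=\; |v_1\wedge\cdots\wedge v_m| \;=\; \vol(\ker f)\,\cdot\,|P(v_{s+1})\wedge\cdots\wedge P(v_m)|,
$$
while $f(v_i)=f(P(v_i))$ for $i>s$ yields $\vol(f(V_1^{(\BZ)}))=|f(P(v_{s+1}))\wedge\cdots\wedge f(P(v_m))|$. The SVD of $f$ identifies ${\det}'(f)$ with $|\det(f|_{(\ker f)^\perp})|$ computed in orthonormal bases, i.e.\ with the factor by which $f$ scales wedge products on $(\ker f)^\perp$, so the last wedge norm equals ${\det}'(f)\cdot |P(v_{s+1})\wedge\cdots\wedge P(v_m)|$. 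Combining these three displays gives $\vol(\ker f)\,\vol(f(V_1^{(\BZ)}))={\det}'(f)\,\vol(V_1)$.

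For (1), the first step reduces to an inequality between sublattice volumes: $V_1+V_2$ is $\BQ$-defined, so $V_1^{(\BZ)}+V_2^{(\BZ)}$ is a full-rank sublattice of $(V_1+V_2)^{(\BZ)}$, and passing to an overlattice only decreases covolume, so $\vol(V_1+V_2)\le\vol(V_1^{(\BZ)}+V_2^{(\BZ)})$. Next, I pick a $\BZ$-basis of $\Lambda_0:=V_1^{(\BZ)}\cap V_2^{(\BZ)}=(V_1\cap V_2)^{(\BZ)}$ and extend it to $\BZ$-bases of $V_1^{(\BZ)}$ and $V_2^{(\BZ)}$; an elementary argument (using $\Lambda_0=V_1^{(\BZ)}\cap V_2^{(\BZ)}$) shows that the union of the three basis pieces is a $\BZ$-basis of $V_1^{(\BZ)}+V_2^{(\BZ)}$. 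Expressing the three volumes as norms of wedge products and applying the Hadamard--Fischer inequality in its block form to the Gram matrices after orthogonally projecting along $\Lambda_0$ yields
$$
\vol(V_1^{(\BZ)}+V_2^{(\BZ)})\,\vol(\Lambda_0)\;\le\;\vol(V_1)\vol(V_2).
$$
Since $\Lambda_0$ is an integral lattice, $\vol(\Lambda_0)\ge 1$, and (1) follows.

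The main technical obstacle is the case $V_1\cap V_2\neq 0$ in (1). One has to verify that the union of the adapted $\BZ$-bases is not merely a spanning set but genuinely a $\BZ$-basis of $V_1^{(\BZ)}+V_2^{(\BZ)}$, and one must then convert Fischer's block-determinant inequality into the sharp projected form that produces the factor $\vol(\Lambda_0)$ on the left-hand side. Everything else is linear algebra once the right adapted bases are in hand.
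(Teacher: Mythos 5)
Your proof is correct. Note that the paper does not prove Proposition~\ref{pro.volume} at all; it merely labels the two identities as ``well-known'' and points to \cite{Bertrand} and \cite{Ri}, so there is no in-text argument to compare yours against. Your reconstruction is the standard one and is sound: for \eqref{eq.103}, the splitting of $0\to(\ker f)^{(\BZ)}\to V_1^{(\BZ)}\to f(V_1^{(\BZ)})\to 0$ (valid because $f(V_1^{(\BZ)})\subset\Omega$ is free), the wedge-product factorization $|v_1\wedge\cdots\wedge v_m|=\vol(\ker f)\cdot|P v_{s+1}\wedge\cdots\wedge P v_m|$, and the identification of $\det'(f)$ with the Jacobian of $f$ on $(\ker f)^\perp=\im(f^*)$ all check out. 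For \eqref{eq.103a}, your chain $\vol(V_1+V_2)\le\vol(V_1^{(\BZ)}+V_2^{(\BZ)})\le\vol(V_1)\vol(V_2)/\vol(\Lambda_0)\le\vol(V_1)\vol(V_2)$ is correct; the adapted-basis step works because $\Lambda_0=V_1^{(\BZ)}\cap V_2^{(\BZ)}$ is primitive in each $V_i^{(\BZ)}$, and a dimension count shows the union of the three basis pieces is $\BC$-independent, hence a $\BZ$-basis of $V_1^{(\BZ)}+V_2^{(\BZ)}$. One small economy you could make: the intermediate inequality you derive, $\vol(V_1^{(\BZ)}+V_2^{(\BZ)})\,\vol(\Lambda_0)\le\vol(V_1)\vol(V_2)$, is precisely the projected Fischer inequality $|Pu_{\bullet}\wedge Pv_{\bullet}|\le|Pu_{\bullet}|\,|Pv_{\bullet}|$ after clearing the common factor $\vol(\Lambda_0)$, so the ``main technical obstacle'' you flag is really just this one-line Hadamard--Fischer step once the adapted bases are in place.
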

For a detailed discussion of \eqref{eq.103} and its generalizations to lattices in $\BZ[\BZ^n]$, see \cite{Ri}.

\section{Regulator, homology torsion, and Ray-Singer torsion} \label{sec.reg}
In this section we explain the relation between the homology torsion and the combinatorial Ray-Singer torsion. Proposition \ref{pro.01} of this section will be
used in the proof of main theorems.

Throughout this section we fix a finitely-generated  based free $\BZ$-complex
$\cE$
$$
0 \to E_m \overset {d_{m-1}}\longrightarrow E_{m-1} \overset {d_{m-1}}\longrightarrow E_{m-2} \dots \overset {d_2}\longrightarrow E_{1}\overset {d_1}\longrightarrow E_{0}\to 0.
$$
Define a Hermitian product on $E_k \otimes _\BZ \BC$ such that the preferred base is an orthonormal base. Now $E_k \otimes _\BZ \BC$   becomes a based Hermitian space.

We use the notations
$$ Z_k = \ker d_k, \quad B_k = \im d_{k+1}, \quad \overline B_k = (B_k \otimes_\BZ \BC) \cap E_k 
.$$

Let
$d_{k}^*: E_{k-1}\to  E_k$ be adjoint of $d_k$ and
$ D_k: E_k \to E_k$ be defined by $$D_k = d_k^* d_k + d_{k+1}d_{k+1}^*.$$

\subsection{Ray-Singer torsion and homology torsion}
Define the Ray-Singer torsion  and the homology torsion of $\cE$ by
\begin{align*}
\tau^{RS}(\cE)& = {\prod_k}^* {\det}' (d_k) \in \BR_+,\\
\tau ^H(\cE) &= \left({\prod_k}^* |\Tor_\BZ(H_k(\cE))|\right)^{-1}.
\end{align*}

\begin{remark}
The Ray-Singer torsion and the homology torsion can be defined through the classical Reidemeister torsion as follows.

Let $\tilde \fh_k$ be an orthonormal basis of $\ker(Dk) \otimes_ \BZ \BC= H_k(\cE\otimes _\BZ \BC)$.
With the bases $\{\tilde \fh_k\}$ of the homology of $\cE\otimes _\BZ \BC$, one can define the Reidemeister torsion 
$\tau^{R}(\cE\otimes _\BZ \BC, \{\tilde \fh_k\})$, defined up to signs (see e.g. \cite{Turaev}). It is not difficult to show that 
$$\tau^{RS}(\cE) = \left| \tau^{R}(\cE\otimes _\BZ \BC, \{\tilde\fh_k\})\right|.$$

 Both $\overline B_k$ and $Z_k$ are primitive lattices in $E_k$, and $\overline B_k \subset Z_k$.
There is a collection $\fh_k$  of elements of $Z_k\subset E_k$ which descend to a basis of the group $Z_k/\overline B_k$, the free part of $H_k(\cE)$.
Since $\fh_k$ is a basis of $H_k(\cE\otimes _\BZ\BC)$, there is defined the Reidemeister torsion $\tau^R(\cE\otimes _\BZ\BC,\{\fh_k\})$.
It is not difficult to prove the following generalization of the Milnor-Turaev formula \cite{Milnor,Turaev}.
$$\tau^H(\cE) =  |\tau^R(\cE\otimes _\BZ\BC,\{\fh_k\})| .$$
\end{remark}

\subsection{Regulators} By definition, $H_k(\cE)= Z_k /B_k$. The $\BZ$-torsion of $H_k(\cE)$ is $\overline B_k/B_k$, and the free part $H_k(\cE)_{free}$ is isomorphic to $Z_k/\overline B_k$.
For this reason, we define the volume  $\vol(H_k(\cE)_{free})$ to be
$$ 
R_k(\cE):= \frac{\vol(Z_k)}{\vol(\overline B_k)}.$$
Here we follow the notation of \cite{BV}, where $R_k$ is called the regulator.
\no{
Define
$$ R(\cE)= {\prod_k}^* R_k(\cE).$$
}
Using Identity \eqref{eq.103}, one can prove (see \cite[Formula 2.2.4]{BV})
\be
\tau^{RS}(\cE) = \tau^H(\cE)\, \left ( {\prod_k}^* R_k(\cE) \right).
\label{eq.e55}
\ee
We will use the following estimate of the regulator.
\begin{proposition} Let $\tilde R_k := \vol(\ker D_k)$.
For every $k$ one has
$$ \tilde R_k \ge R_k \ge \frac 1{\tilde R_k}.$$
\label{pro.01}
\end{proposition}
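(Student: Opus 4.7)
The plan is to realize $R_k = \vol(Z_k)/\vol(\overline B_k)$ geometrically as the covolume of a single lattice $\Lambda$ sitting inside the harmonic subspace $\ker D_k$, and then to sandwich $\Lambda$ between the $\BZ$-support $\ker D_k \cap E_k$ and its Hermitian dual.

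The starting point is the Hodge-type orthogonal decomposition
$$Z_k \otimes \BC = (B_k \otimes \BC) \oplus \ker D_k,$$
which holds because for $d_{k+1}a \in B_k$ and $h \in \ker D_k \subset \ker d_{k+1}^*$ one has $(d_{k+1}a,h) = (a,d_{k+1}^*h) = 0$. Let $\pi : Z_k\otimes\BC \to \ker D_k$ be the corresponding orthogonal projection and set $\Lambda := \pi(Z_k)$. Since $\overline B_k = (B_k\otimes\BC) \cap Z_k$ is primitive in $Z_k$, I pick a $\BZ$-basis $\{v_i\}$ of $\overline B_k$ and extend it to a $\BZ$-basis $\{v_i,w_j\}$ of $Z_k$. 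The transformation $w_j \mapsto \pi(w_j)$ is $\BC$-unitriangular (it subtracts a $\BC$-combination of the $v_i$), so it preserves Gram determinants; the resulting basis splits orthogonally between the $v_i$'s and the $\pi(w_j)$'s, and the latter form a $\BZ$-basis of $\Lambda$. Factorizing the Gram determinant then yields $\vol(Z_k) = \vol(\overline B_k)\,\vol(\Lambda)$, so $R_k = \vol(\Lambda)$.

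Next, let $M := \ker D_k \cap E_k$. Since $d_k$ has an integer matrix in the preferred orthonormal basis, so does $d_k^*$, and hence $D_k$ is integral; therefore $\ker D_k$ is defined over $\BQ$ and $M$ is a full-rank lattice in $\ker D_k$ with $\vol(M) = \tilde R_k$. I would then establish the chain
$$M \subset \Lambda \subset M^\sharp,$$
where $M^\sharp := \{v \in \ker D_k : (v,M) \subset \BZ\}$ is the Hermitian dual. The first inclusion is immediate: any $v \in M$ lies in $\ker d_k \cap E_k = Z_k$ and is fixed by $\pi$. For the second, given $v = \pi(z) \in \Lambda$ with $z \in Z_k$ and $u \in M \subset \ker D_k$, the orthogonality $B_k\otimes\BC \perp \ker D_k$ gives $(\pi(z),u) = (z,u)$, an integer since $z,u \in E_k$ have integer coefficients in an orthonormal basis.

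Finally, $\vol(M)\,\vol(M^\sharp) = 1$ together with the standard fact that $L \subset L'$ implies $\vol(L) \ge \vol(L')$ for full-rank lattices converts the chain $M \subset \Lambda \subset M^\sharp$ into $\tilde R_k \ge R_k \ge 1/\tilde R_k$. The only step where care is needed is the first one: verifying that the projected vectors $\pi(w_j)$ genuinely form a $\BZ$-basis of $\Lambda$ (rather than merely a $\BC$-basis of its $\BC$-span) and that the unitriangular complex change of basis correctly yields the Gram-determinant factorization. The rest of the argument is formal.
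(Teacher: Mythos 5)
Your proof is correct and runs parallel to the paper's up to the final step. The paper also identifies $R_k$ with $\vol(p(Z_k))$ (your $\vol(\Lambda)$), uses Hodge theory to write $\ker D_k = W^{(\BZ)}$ (your $M$) for the integral points of the harmonic summand, and obtains $\tilde R_k\ge R_k$ from the inclusion $M\subset\Lambda$. Where the two proofs diverge is the lower bound: the paper cites a duality formula of Bertrand,
$$ R_k = \frac{\bigl[(W\cap Z_k^*):\ker D_k\bigr]}{\tilde R_k}, $$
and simply observes the numerator is an integer $\ge 1$; you instead prove $\Lambda\subset M^\sharp$ directly and invoke $\vol(M)\,\vol(M^\sharp)=1$. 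These are the same computation in different clothing --- the lattice $W\cap Z_k^*$ is your $\Lambda^\sharp$, and dualizing the inclusion $\Lambda\subset M^\sharp$ yields $M\subset\Lambda^\sharp$, whose index is precisely Bertrand's numerator --- but your version is self-contained and has the virtue that both bounds fall out of the single chain $M\subset\Lambda\subset M^\sharp$, making the symmetry transparent. The Gram-determinant factorization you flag at the end is indeed fine: the $\pi(w_j)$ form a $\BZ$-basis of $\Lambda$ because the $w_j$ already descend to a $\BC$-basis of $(Z_k\otimes\BC)/(B_k\otimes\BC)\cong\ker D_k$ and $\pi$ kills the $v_i$, and the unitriangular base change over $\BC$ has determinant of modulus one, so it preserves the Gram determinant, which then factors block-diagonally by orthogonality.
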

\begin{proof} Let $W$ be the orthogonal complement of $B_k \otimes _\BZ \BC$ in $Z_k \otimes _\BZ \BC$, and  $p:Z_k \otimes _\BZ \BC \to W$ be the orthogonal projection. Then
$$ R_k = \vol(p(Z_k)).$$

By Hodge theory (for finitely-generated $\BZ$-complex),
$$ \ker(D_k) = E_k \cap W = W^{(\BZ)}.$$
It follows that $\ker(D_k) \subset p(Z_k)$, and hence $\vol(p(Z_k) \le \vol(\ker(D_k))$, or
\be R_k \le \tilde R_k. \label{eq.90}
\ee

By \cite[Proposition 1(ii)]{Bertrand},
\be
 R_k = \frac{\vol(Z_k)}{\vol(\overline B_k)} = \frac{[(W \cap Z_k^*): D_k]}{\tilde R_k},
 \label{e1}
 \ee
where
$ Z_k^*$ is the $\BZ$-dual of $Z_k$ in $\Z_k \otimes_ \BZ \BC$ under the inner product. Note that $Z_k^*$ is also the orthogonal projection of $E_k$ on to $Z_k \otimes_\BZ \BC$.

Since the numerator of \eqref{e1} is $\ge 1$, we have $ R_k \ge 1/\tilde R_k$, which, together with \eqref{eq.90}, proves the proposition.
\end{proof}

\section{Abelian groups, algebraic subgroups of $(\BC^*)^n$, and torsion points }
\label{sec.torsion1}

 We review some facts about
representation theory of finite abelian groups in  subsection \ref{ss4} and the theory of torsion points on rational algebraic sets (a simple version of Manin-Mumford principle) in subsections \ref{sec.torsion} and \ref{sec.tor2}.

\subsection{Decomposition of the group ring of a finite abelian group}
\label{ss4}
Suppose $A$ is a finite  abelian group. The group ring $\BC[A]$ is an $A$-module (the regular representation) and is a  $\BC$-vector space of dimension $|A|$.
Equip $\BC[A]$ with a Hermitian product so that $A$ is an orthonormal basis. This makes $\BC[A]$ a based Hermitian space, with
 $\BZ[A]$  the fundamental lattice.

Let $\hat A = \Hom(A, \BC^*)$, known as the Pontryagin dual of $A$, be the group of all characters of $A$. Here $\BC^*$ is the multiplicative group of non-zero complex numbers.
We have $|\hat A|=|A|$.

The theory of representations of $A$ over $\BC$ is easy: $\BC[A]$ decomposes as a direct sum of mutually orthogonal one-dimensional $A$-modules:

\be \BC[A] = \bigoplus_{\chi \in \hat A} \BC e_\chi,
\label{e13}
\ee
where  $e_\chi$ is the idempotent
\be
 e_\chi = \frac{1}{|A|} \sum_{a \in A} \chi(a^{-1} ) a.
 \label{eq.10}
 \ee

The vector subspaces $\BC e_\chi$'s are not only orthogonal with respect to the Hermitian structure, but also orthogonal with respect to the ring structure in the sense that $e _\chi \, e_{\chi'} =0$ if $\chi \neq \chi'$. Each $\BC e_\chi$ is an ideal of the ring $\BC[A]$.

From the trace identity (see e.g. \cite[Section 2.4]{Serre}) we have, for every $a\in A$,
\be \sum_{\chi\in \hat A} \chi(a) = \begin{cases}
0 \quad &\text{if } a \neq e \\
|A| \quad & \text{if } a \neq e.
\end{cases}
\label{eq5}
\ee
Here $e\in A$ is the trivial element.

\def \cU {\mathbb U}
\def\Gal{\mathrm{Gal}}
\def\bu{\mathbf{u}}
\def\bv{\mathbf{v}}

\subsection{Algebraic subgroups of $(\BC^*)^n$ and lattices in $\BZ^n$} \label{sec.torsion}

\subsubsection{Algebraic subgroups of $(\BC^*)^n$}
An {\em algebraic subgroup of $(\BC^*)^n$} is a subgroup which is closed in the Zariski topology.

For a lattice $\Lambda$, i.e. a subgroup  $\Lambda$ of $\BZ^n$, not necessarily of maximal rank, let
$G(\Lambda)$  be the set of all  $\bz \in \BC^n $ such that $\bz^\bk=1$ for every $\bk \in \Lambda$. Here for $\bk=(k_1,\dots,k_n) \in \BZ^n$ and $\bz=(z_1,\dots,z_n)\in (\BC^*)^n$ we set
$\bz^\bk = \prod_i z_i^{k_i}$.

It is easy to see that $G(\Lambda)$ is an algebraic subgroup.  The converse holds true: Every algebraic subgroup is equal to $G(\Lambda)$ for some lattice $\Lambda$, see  \cite{Schmidt_a}.
If $\Lambda$ is primitive, then $G(\Lambda)$ is connected, and in this case it is called a {\em torus}. 

\subsubsection{Automorphisms of $(\BC^*)^n$}\label{sec.auto}
An example of a torus of dimension $l$ is the standard $l$-torus $T=(\BC^*)^l \times 1^{n-l} \subset (\BC^*)^n$, which is $G(\Xi_{n-l})$, where
$$\Xi_{n-l}= \{(k_1,\dots,k_n)\in \BZ^n  \mid
k_1=\dots=k_l=0\}.$$
The following trick shows that each torus is isomorphic to the standard torus. For details see \cite{Schmidt_a}.

For matrix $K \in GL_n(\BZ)$ with entries $(K_{ij})_{i,j=1}^n$, one can define an automorphism $\varphi_K$ of $(\BC^*)^n$ by
$$\varphi_K (z_1,z_2,\dots,z_n) = \left( \prod_{j=1}^n z_j^{K_1j},  \prod_{j=1}^n z_j^{K_2j}, \dots, \prod_{j=1}^n z_j^{K_nj} \right).$$
For any lattice $\Lambda\subset \BZ^n$, $\varphi_K(G(\Lambda))= G(K(\Lambda))$.
When $\Lambda$ is a primitive lattice of rank $n-l$, there is $K\in GL_n(\BZ)$ such that $K(\Lambda)=\Xi_{n-l}$. Then $\varphi_K(G(\Lambda))$ is the standard $l$-torus.

\subsubsection{Algebraic subgroups and character groups} Fix generators $t_1,\dots,t_n$ of $\BZ^n$. We will write $\BZ^n$ multiplicatively and use the identification
$\BZ[\BZ^n]= \BZ[t_1^{\pm1}, \dots, t_n^{\pm 1}]$.

Suppose $\Gamma\subset \BZ^n$ is a lattice.
Every element $\bz \in G(\Gamma)$ defines a character $\chi_\bz$ of the quotient group $A_\Gamma:= \BZ^n/\Gamma$ via
$$ \chi_\bz(t_1^{k_1} \dots t_n ^{k_n}) = \bz^\bk,   \quad \text{where } \bk=(k_1,\dots,k_n).$$
Conversely, every character of $A_\Gamma$ arises in this way.
Thus one can identify $G(\Gamma)$ with  the Pontryagin dual $\hat A_\Gamma$ via $\bz \to \chi_\bz$.

We will write $e_\bz$ for the idempotent $e_{\chi_\bz}$, and the decomposition \eqref{e13}, with $\Gamma$ having maximal rank, now becomes
\be \BC[A_\Gamma] = \bigoplus_{\bz \in G(\Gamma)} \BC \,e_\bz.
\label{e133}
\ee

\subsection{Torsion points in $\BQ$-algebraic sets}
\label{sec.tor2}

\subsubsection{Torsion points} With respect to the usual multiplication $\BC^* := \BC \setminus \{0\}$ is an abelian group, and so is the direct product $(\BC^*)^n$.
The subgroup of torsion elements of $\BC^*$, denoted by $\cU$,  is the group  of roots of unity, and $\cU^n$ is the torsion subgroup of $(\BC^*)^n$.

If $\Gamma \subset \BZ^n$ is a lattice of maximal rank, then $G(\Gamma)$ is finite, and $G(\Gamma)\subset \cU^n$.
\subsubsection{Torsion points and torsion coset} A {\em torsion coset} is any set of the form $\bz\,  G(\Lambda)$, where $\bz \in \cU^n$ and $\Lambda\subset \BZ^n$ is primitive, i.e. $G(\Lambda)$ is a torus.

Suppose $Z \subset \BC^n$. A torsion coset $X\subset Z$ is called a {\em maximal torsion coset in $Z$} if it is not a proper subset of any torsion coset in $Z$.

The following fact, known as  the Manin-Mumford theory for torsion points, is well-known, see   \cite{Laurent,Schmidt_a}.
\begin{proposition}\cite{Laurent} Suppose  $Z \subset \BC^n$ is an algebraic closed set  defined over $\BQ$. There are in total only  a finite number of maximal torsion cosets $X_j \subset Z$, $j=1, \dots, q$.
A torsion point $\bz\in \cU^n $ belongs to $Z$  if and only if
$z \in X_j$ for some $j$, i.e.
\be
Z \cap \cU^n = \bigcup_{j=1}^q (X_j \cap \cU^n).
\ee
\label{p.10}
\end{proposition}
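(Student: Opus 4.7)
The plan is to proceed by Noetherian induction on the dimension of $Z$, with the key input being a structural statement about torsion points on irreducible varieties defined over $\bar\BQ$.

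\textbf{Step 1: Setup and reduction.} Since $Z$ is closed and defined over $\BQ$, decompose $Z = \bigcup_i V_i$ into its finitely many $\bar\BQ$-irreducible components; the Galois group $\Gal(\bar\BQ/\BQ)$ permutes them. Because a torsion coset $X$ is contained in $Z$ iff it is contained in some $V_i$, it suffices to find, for each $V_i$, a finite list of maximal torsion cosets of $V_i$, and then collect their Galois orbits to obtain a list defined over $\BQ$. Maximality in $Z$ will be recovered at the end by dropping any coset properly contained in another member of the list.

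\textbf{Step 2: The key lemma.} I would invoke (rather than reprove) the fundamental Manin-Mumford type statement in the toric setting, due to Ihara--Serre--Tate and Laurent \cite{Laurent}: if $V\subset (\BC^*)^n$ is a $\bar\BQ$-irreducible algebraic variety containing a Zariski-dense set of torsion points, then $V$ is itself a torsion coset $\bz\, G(\Lambda)$ for some $\bz\in \cU^n$ and primitive $\Lambda\subset\BZ^n$. The cleanest proof proceeds arithmetically: for a torsion point $\bz\in V$ of order $N$ coprime to a prime $p$, the Galois conjugate $\bz^p$ lies in $V$ as well, so $\bz \in V \cap [p]^{-1}(V)$. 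If for some $p$ this intersection is a proper subvariety, one descends by dimension; otherwise $V=[p](V)$ for infinitely many $p$, and the map $\bz\mapsto \bz^p$ together with an isogeny argument (or passage to the quotient by the stabilizer) forces $V$ to be a coset of a subtorus. Alternatively, one may appeal to Laurent's height-theoretic argument using Dobrowolski-type lower bounds.

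\textbf{Step 3: Noetherian induction on each component.} Fix $V_i$. Let $W_i \subset V_i$ be the Zariski closure of $V_i\cap \cU^n$; it is defined over $\bar\BQ$. By Step 2 applied to each $\bar\BQ$-irreducible component of $W_i$, each such component is itself a torsion coset. Since $W_i$ has finitely many irreducible components, $W_i$ is a finite union of torsion cosets, and $V_i\cap \cU^n = W_i \cap \cU^n$ by construction. Collecting these cosets over all $V_i$ and discarding those properly contained in another produces a finite list $X_1,\dots,X_q$ of maximal torsion cosets in $Z$.

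\textbf{Step 4: Verification of the decomposition.} Any $\bz\in Z\cap\cU^n$ lies in some $V_i$, hence in $W_i$, and therefore in one of its irreducible toric-coset components, which is contained in some $X_j$ by maximality. Conversely each $X_j\subset Z$ contributes torsion points to $Z\cap\cU^n$, giving the asserted equality $Z\cap\cU^n = \bigcup_{j=1}^q (X_j\cap \cU^n)$. The main obstacle is Step 2, which is the actual content of Manin--Mumford in $(\BC^*)^n$; I would not attempt to reprove it and would simply cite \cite{Laurent,Schmidt_a}, as the paper already signals. The remaining steps are a straightforward Noetherian/Galois bookkeeping once that tool is in hand.
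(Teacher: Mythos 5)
The paper itself gives no proof of Proposition~\ref{p.10}: it is stated as a known theorem and referenced to Laurent and to Schmidt, so there is no internal argument to compare against. Your sketch is a correct and fairly standard reduction of the general statement to the irreducible case, and it quite properly defers to \cite{Laurent} for the hard step (a $\bar\BQ$-irreducible subvariety of $(\BC^*)^n$ with Zariski-dense torsion is a torsion coset), which is also all the paper relies on. Two small points worth tightening if you wanted a self-contained account: (i) in Step 3 you implicitly use that if $T$ is Zariski-dense in $W_i$ then $T$ meets each $\bar\BQ$-irreducible component of $W_i$ in a dense subset; this is true but deserves a sentence, since it is exactly what lets you apply the key lemma componentwise. (ii) In Step 2 the descent heuristic as written (``$V=[p](V)$ for infinitely many $p$'' forcing a coset) is not quite the shape of Laurent's or the Ihara--Serre--Tate argument --- one fixes a single suitable prime $p$ coprime to the torsion order, compares $V$ with $\sigma_p(V)$ and with $[p]^{-1}V$ using degree bounds, and either drops dimension or extracts a stabilizing subtorus; but since you cite the result rather than reprove it, this inaccuracy is harmless. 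The bookkeeping in Steps 1 and 4 (finiteness, maximality after discarding redundant cosets, and closure of the final list under $\Gal(\bar\BQ/\BQ)$, which the paper later uses in \eqref{eq.10d}) is correct.
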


\def\cl{\mathrm{cl}}
\subsubsection{$\BQ$-closure}Let $\Gal(\BC/\BQ)$ be the set of all field automorphisms of $\BC$ fixing every point in $\BQ$. For
$\bz =(z_1,\dots,z_n)$, a {\em Galois conjugate} of $\bz$ is any point of the form $\sigma(\bz)= (\sigma(z_1), \dots, \sigma(z_n))$, where $\sigma\in \Gal(\BC/\BQ)$.

Suppose $X\subset \BC^n$. Define the {\em $\BQ$-closure of $X$} by  
$$\cl_\BQ(X):=\cup_{\sigma \in \Gal(\BC/\BQ)} \sigma(X).$$

If $X\subset Z$, where $Z\subset \BC^n$ is an algebraic set
defined over $\BQ$, then $\cl_\BQ(X) \subset Z$.

If  $\bz \in \cU^n$ is a torsion point, then the $\BQ$-closure of $\{\bz\}$, also denoted by $\cl_\BQ(\bz)$, consists of a finite number of torsion points. One can prove that if
torsion order of $\bz$ is $k$, then $|\cl_\BQ(\bz)|= \phi(k)$, where $\phi$ is the Euler totient function. Though we don't need this result.

\begin{lemma} \label{lem.cl}
Let $X\subset (\BC^*)^n$ be a torsion coset. Then there exists a torsion point $\bu\in \cU^n$ and a primitive lattice $\Lambda\subset \BZ^n$ such that 
$$\cl_\BQ(X) = \bigsqcup_{\bz \in \cl_\BQ(\bu)} \bz\, G(\Lambda).$$
\end{lemma}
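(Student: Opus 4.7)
The plan is to combine the fact that $G(\Lambda)$ is a $\BQ$-algebraic subgroup (hence Galois invariant as a set) with the standard-torus normalization of Section \ref{sec.auto} to choose a representative that makes the union disjoint. First, by definition of torsion coset, I write $X=\bu'\,G(\Lambda)$ for some $\bu'\in\cU^n$. Since $\Lambda\subset\BZ^n$, the defining equations $\bz^\bk=1$ for $\bk\in\Lambda$ have integer coefficients, so $G(\Lambda)$ is defined over $\BQ$ and hence $\sigma(G(\Lambda))=G(\Lambda)$ for every $\sigma\in\Gal(\BC/\BQ)$. Consequently,
\[
\cl_\BQ(X)=\bigcup_{\sigma}\sigma(\bu')\,G(\Lambda)=\bigcup_{\bz\in\cl_\BQ(\bu')}\bz\,G(\Lambda).
\]
This gives the set-theoretic decomposition; the substantive point is disjointness, since different Galois conjugates of $\bu'$ may differ by an element of $G(\Lambda)$ and so define the same coset.

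To force disjointness I invoke the trick of Section \ref{sec.auto}. Since $\Lambda$ is primitive of rank $n-l$, there exists $K\in GL_n(\BZ)$ with $K(\Lambda)=\Xi_{n-l}$, so that $\varphi_K(G(\Lambda))$ equals the standard torus $T:=(\BC^*)^l\times\{1\}^{n-l}$. Because $\varphi_K$ is a monomial map with integer exponents, a direct check gives $\varphi_K\circ\sigma=\sigma\circ\varphi_K$ on $(\BC^*)^n$; in particular $\varphi_K$ sends $\BQ$-closures to $\BQ$-closures, so it suffices to prove the lemma for $\varphi_K(X)$ and $T$. Writing $\varphi_K(\bu')=(v_1,\dots,v_l,w_{l+1},\dots,w_n)$, the coset modulo $T$ depends only on $(w_{l+1},\dots,w_n)$, so I replace the representative by the canonical choice $\bu'':=(1,\dots,1,w_{l+1},\dots,w_n)\in\cU^n$. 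Any Galois conjugate $\sigma(\bu'')$ has its first $l$ coordinates equal to $1$ (these are Galois fixed), so $\sigma_1(\bu'')T=\sigma_2(\bu'')T$ iff the last $n-l$ coordinates of $\sigma_1(\bu'')$ and $\sigma_2(\bu'')$ coincide iff $\sigma_1(\bu'')=\sigma_2(\bu'')$. Hence the union indexed by $\cl_\BQ(\bu'')$ is disjoint.

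Finally, set $\bu:=\varphi_K^{-1}(\bu'')\in\cU^n$; then $\bu\,G(\Lambda)=X$, and transporting the disjoint decomposition back by the Galois-equivariant isomorphism $\varphi_K^{-1}$ yields
\[
\cl_\BQ(X)=\bigsqcup_{\bz\in\cl_\BQ(\bu)}\bz\,G(\Lambda),
\]
as required. The only nontrivial step is disjointness, and it is resolved by choosing the representative $\bu$ so that all coordinates tangent to $G(\Lambda)$ (after the $\varphi_K$-normalization) are trivial; the Galois action is then concentrated on the transverse coordinates, which are precisely the ones that distinguish cosets.
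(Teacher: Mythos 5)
Your proof is correct and follows essentially the same route as the paper: normalize the torus to the standard form $(\BC^*)^l\times\{1\}^{n-l}$ via a $\varphi_K$ from Section \ref{sec.auto}, choose the coset representative with trivial entries in the torus directions, and observe that Galois acts only on the transverse coordinates, forcing disjointness. The paper simply states "we can assume $T$ is the standard torus," whereas you spell out the Galois-equivariance of $\varphi_K$ and the transport back by $\varphi_K^{-1}$ — a harmless bit of extra bookkeeping.
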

\begin{proof} By definition, there is a torus $T$ of dimension $l$ and
 a torsion point $\bu'$ such that $X= \bu' T$. Using an automorphism as described in Subsection \ref{sec.auto},
we can assume that $T$ is the standard $l$-torus, $T:= (\BC^*)^l \times 1^{n-l}$. Let $\bu\in \cU^n$ be the point whose first $l$-coordinates are 1, and the $j$-th coordinate is the same as that of $\bu'$ for
$j >l$. Alternatively, $\bu$ is the only intersection point of $X$ and $1^l \otimes (\BC^*)^{n-l}$.

Then $\bu \, T = \bu' \,T=X$. Any Galois conjugate of $\bu$ is in $1^l \otimes (\BC^*)^{n-l}$. If $\bz, \bz'$ are two distinct Galois conjugates of $\bu$, then $\bz ^{-1} \bz' \in 1^l \otimes (\BC^*)^{n-l}$, and hence $\bz ^{-1} \bz' \not \in G(\Lambda)$. It follows easily that $\cl_\BQ(X) = \cl_\BQ(\bu\, T) = \bigsqcup_{\bz \in \cl_\BQ(\bu)} \bz\, T$.
\end{proof}

\section{A growth rate estimate} \label{sec.tech}
In this section we prove Proposition \ref{pro.01a}, establishing a crucial growth estimate of volumes of subspaces depending on a torsion coset $X$ and a lattice $\Gamma \subset \BZ^n$ of maximal rank. 

\subsection{Settings and notations}
Throughout this section fix a torsion coset $X \subset (\BC^*)^n$, $X \neq (\BC^*)^n$,   and a $k \times l$ matrix $\DD$ with entries in $\BZ[\BZ^n]=\BZ[t_1^{\pm1},\dots, t_n^{\pm1}]$.

By right multiplication, we consider $\DD$ as a $\BC[\BZ^n]$-morphism
\be \DD: \BC[\BZ^n]^k \to \BC[\BZ^n]^l.
\label{eq.act}
\ee

 Suppose $\Gamma \le \BZ^n$ is a lattice of maximal rank. In what follows we fix $X, \DD$ but vary $\Gamma$.

Let $A= A_\Gamma := \BZ^n /\Gamma$, a finite abelian group. Equip $\BC[A]$ with the structure of a based Hermitian space as in Section \ref{ss4}. The fundamental lattice of $\BC[A]$ is $\BZ[A]$.

The map $\DD$ in \eqref{eq.act} descends to an integral $\BC[A]$-morphism
$$ \DD_\Gamma: \BC[A]^k \to \BC[A]^l.$$

By Lemma \ref{lem.cl}, there is a primitive lattice $\Lambda\le \BZ^n$ and  a
torsion point $\bu\in \cU^n$ such that
\be
 \cl_\BQ(X) = \bigsqcup_{j=1}^r \bu_j G(\Lambda),
 \label{eq.decomp}
 \ee
where $\{\bu_j, j=1\dots, r\}$ is the set of all Galois conjugates of $\bu$. Since $X \neq (\BC^*)^n$, $\Lambda$ is not the trivial group, $\Lambda \neq \{0\}$.

Let $B= B_{\Gamma,\Lambda}:= (\Gamma+ \Lambda)/\Gamma$. Then $B$ is a subgroup of $A=\BZ^n/\Gamma$.

\subsection{Decomposition of $\DD_\Gamma$ and norm of $\DD_\Gamma$}
Recall that $\hat A= G(\Gamma)$ and we have the decomposition \eqref{e133} of $A$-module
$$ \BC[A] = \bigoplus_{\bz \in \hat A} \BC e_\bz.$$
For each $\bz\in \hat A= G(\Gamma)$, $\DD$ induces a $\BC[A]$-map
$$ \DD(\bz): (\BC e_\bz)^k \to (\BC e_\bz)^l.$$
Here $\DD(\bz)$ is simply the $k\times l$ matrix with entries in $\BC$, obtained by evaluating $\DD$ at $\bz$. (Recall that each entry of $\DD$ is a Laurent polynomial in
$n$ variables, and one can evaluate such a Laurent polynomial at any point $\bz\in (\BC^*)^n$.)
We have
 $$\DD_\Gamma = \bigoplus_{\bz \in \hat A} \DD(\bz).$$

 It follows that
 $$ ||\DD_\Gamma|| = \max_{\bz \in \hat A} ||\DD(\bz)||.$$
  It is easy to see that the norm of any $k\times l$ matrix  is less than or equal to $kl$ times the sum of  the absolute values of all the entries.

 For a Laurent polynomial $f \in \BZ[\BZ^n]$ let the {\em $\ell^1$-norm of $f$}\  be the sum of the absolute values of all the coefficients of $f$.
 Let $||\DD||_1$ be the sum of the $\ell^1$-norms of all of its entries. Then we have $||\DD(\bz)|| \le kl ||\DD||_1$ because  each component of $\bz$ has absolute value 1. Thus we have the following uniform upper bound for $\DD_\Gamma$:
 \be ||\DD_\Gamma|| \le kl ||\DD||_1.
 \label{eq.301}
 \ee

\subsection{Integral decomposition of $\DD_\Gamma$ along $X$} \label{sec.Xj}
For each $\bz \in G(\Gamma)$, the 1-dimensional vector space $\BC e_\bz$ is in general not defined over $\BQ$. However, its $\BQ$-closure is defined over $\BQ$.

Consider the following $\BC[A]$-submodule    $\al(\Gamma,X)$ of $\BC[A]$:
$$\al(\Gamma,X):= \bigoplus_{\bz \in \hat A \cap \cl_\BQ(X)} \BC e_\bz \, \subset\,  \BC[A] = \bigoplus_{\bz \in \hat A} \BC e_\bz.$$
Since the set $\hat A \cap \cl_\BQ(X)$ is closed under Galois conjugations, $\al(\Gamma, X)$ is defined over $\BQ$. The orthogonal complement of $\al(\Gamma,X)$ is
$$\al^\perp(\Gamma,X):= \bigoplus_{\bz \in \hat A \setminus \cl_\BQ(X)} \BC e_\bz.$$

We have  $\BC[A]= \al(\Gamma,X) \oplus \al(\Gamma,X)$, and each of $\{ \al(\Gamma,X), \al^\perp(\Gamma,X)\}$ is an ideal of $\BC[A]$, or an $A$-subspace of $\BC[A]$.

The $A$-morphism $\DD_\Gamma$ restricts to  $A$-morphisms
$$ \DD_{\Gamma,X} : \al(\Gamma,X)^k \to \al(\Gamma,X)^l \quad \text{and} \quad  \DD_{\Gamma,X}^\perp : (\al^\perp(\Gamma,X))^k \to  (\al^\perp(\Gamma,X))^l, $$
which are integral.
We have $\DD_\Gamma = \DD_{\Gamma,X} \oplus \DD_{\Gamma,X}^{\perp}.$

\subsection{Projection onto $\al_{\Gamma,X}$}

\begin{lemma} \label{lem.a1}
If $G(\Gamma) \cap X \neq \emptyset$, then
\be\dim_\BC(\al_{\Gamma,X})= r|A|/ |B|
\label{eq.201a}
\ee
 and
 the orthogonal projection from $\BC[A]$ onto  $\al_{\Gamma,X}$ is given by the idempotent
 \be
 N_X:= \frac{1}{|B|}\sum_{j=1}^r \sum_{b \in B} \bu_j\,  (b^{-1}) b.
 \label{eq.11}
 \ee
\end{lemma}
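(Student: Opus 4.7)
The plan is to count $|\hat A\cap\cl_\BQ(X)|$, which equals $\dim_\BC\al_{\Gamma,X}$ since the lines $\BC e_\bz$ are one-dimensional and mutually orthogonal. Using \eqref{eq.decomp} I would write
\[
\hat A\cap\cl_\BQ(X)=\bigsqcup_{j=1}^r\bigl(\hat A\cap \bu_j G(\Lambda)\bigr).
\]
Each piece is non-empty: $\hat A=G(\Gamma)$ is cut out by $\BQ$-polynomial equations, hence Galois-invariant, so once one translate meets $\hat A$ every Galois conjugate does. When non-empty, $\hat A\cap\bu_j G(\Lambda)$ is a coset of the subgroup $G(\Gamma)\cap G(\Lambda)=G(\Gamma+\Lambda)$, whose cardinality is $[\BZ^n:\Gamma+\Lambda]=|A|/|B|$. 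Summing over $j$ produces \eqref{eq.201a}.

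For the idempotent formula, the orthogonal projection onto $\al_{\Gamma,X}$ is $P:=\sum_{\bz\in\hat A\cap\cl_\BQ(X)}e_\bz$. Substituting \eqref{eq.10} and swapping orders of summation, the coefficient of $a\in A$ in $P$ is
\[
\frac{1}{|A|}\sum_{j=1}^r\sum_{\bz\in\hat A\cap \bu_j G(\Lambda)}\bz(a^{-1}).
\]
Fix a base point $\bz_j^0\in\hat A\cap\bu_j G(\Lambda)$; writing $\bz=\bz_j^0\bw$ with $\bw\in G(\Gamma+\Lambda)$, the inner sum factors as $\bz_j^0(a^{-1})\sum_{\bw\in G(\Gamma+\Lambda)}\bw(a^{-1})$. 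Since $G(\Gamma+\Lambda)$ is the Pontryagin dual of $A/B=\BZ^n/(\Gamma+\Lambda)$, the trace identity \eqref{eq5} applied to $A/B$ makes this inner sum equal to $|A|/|B|$ when $a\in B$ and $0$ otherwise, so only the terms with $a\in B$ survive.

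Finally I would identify $\bz_j^0(b^{-1})$ with $\bu_j(b^{-1})$ for $b\in B$. Writing $\bz_j^0=\bu_j\bv_j$ with $\bv_j\in G(\Lambda)$ and lifting $b\in B=(\Gamma+\Lambda)/\Gamma$ to some $\lambda\in\Lambda$, the relation $\bv_j^\lambda=1$ gives $\bz_j^0(b^{-1})=\bu_j^{-\lambda}$; different $\Lambda$-lifts of $b$ differ by an element of $\Gamma\cap\Lambda$, on which $\bu_j=\bz_j^0\bv_j^{-1}$ is trivial (as $\bz_j^0$ kills $\Gamma$ and $\bv_j$ kills $\Lambda$), so the expression $\bu_j(b^{-1})$ is unambiguous. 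Combining with the previous step yields $P=\tfrac{1}{|B|}\sum_{j=1}^r\sum_{b\in B}\bu_j(b^{-1})b=N_X$. I expect the main subtlety to be this last bookkeeping: confirming that each torsion point $\bu_j\in(\BC^*)^n$ descends to a well-defined character of $B$ which agrees with the restriction of $\bz_j^0$, and that the Galois argument really supplies a $\bz_j^0$ in every piece.
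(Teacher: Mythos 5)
Your proof is correct and follows essentially the same route as the paper: decompose $\hat A\cap\cl_\BQ(X)$ into cosets of $G(\Gamma+\Lambda)$ to get the count, then expand $\sum_\bz e_\bz$ via \eqref{eq.10} and kill the non-$B$ terms with character orthogonality on $A/B$. Your final bookkeeping — verifying that $\bu_j(b^{-1})$ is a well-defined function of $b\in B$ agreeing with $\bz_j^0(b^{-1})$ — is in fact slightly more careful than the paper's writeup, which dispatches that step with a bare citation of \eqref{eq.b1}.
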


\begin{proof}
Since $\hat A=G(\Lambda)$ is defined over $\BQ$, if it intersects $X$, then it intersects every component $\bu_j\,  G(\Lambda)$ of the decomposition \eqref{eq.decomp} of $\cl_\BQ(X)$.
 Let $\bu_j' \in G(\Gamma) \cap \bu_j \, G(\Lambda)$. Since $G(\Lambda)$ is a subgroup of $(\BC^*)^n$ and $\bu_j' \in \bu_j \, G(\Lambda)$, we have $\bu_j \, G(\Lambda)= \bu_j' \, G(\Lambda)$
 and
 \be \bu_j^{-1} \bu_j' \in G(\Lambda).
 \label{eq.b1}
 \ee

Since $G(\Gamma)$ is a subgroup and $\bu_j'\in G(\Gamma)$, we have $G(\Gamma)= \bu_j'\, G(\Gamma)$, and hence
$$G(\Gamma) \cap \bu_j \, G(\Lambda) = \bu_j' [ G(\Gamma) \cap G(\Lambda)]= \bu_j'[G(\Gamma+ \Lambda)].$$

From the above identity and the decomposition  \eqref{eq.decomp} we have   
\be
G(\Gamma) \cap \cl_\BQ(X) = \bigsqcup_{j=1}^r \bu_j'[G(\Gamma+\Lambda)].
\label{eq.6}
\ee

Let $s= |A|/|B|$, which is the cardinality of the quotient group $A/B=\BZ^n /(\Gamma+\Lambda)$. Then $|G(\Gamma+\Lambda)|= s$, since $G(\Gamma+\Lambda)$ is the Pontryagin dual of $A/B=\BZ^n /(\Gamma+\Lambda)$.
From \eqref{eq.6} we have  $|\hat A\cap \cl_\BQ(X) |= rs$. Hence $\dim_\BC(\al_{\Gamma,X})= rs=r|A|/ |B|$. This proves \eqref{eq.201a}.

For each element of the quotient group $A/B$ choose a lift in $A$, and denote by $C$ the set of all such lifts. We assume that the chosen lift of the trivial element is $e$.
Then
$$A =\{ bc \mid b\in B, c \in C\}.$$
From  \eqref{eq5}, for every $c\in C$,
\be
\sum_{\bz\in G(\Gamma+ \Lambda)} \bz(c) = \begin{cases}
0 \quad &\text{if } c \neq e \\
|C| \quad & \text{if } c= e.
\end{cases}
\label{eq.5a}
\ee

Recall that for $\bz \in \hat A= G(\Lambda)$, $e_\bz$ is an idempotent in $\BC[A]$, and $e_\bz e_{\bz'}=0$ if $\bz \neq \bz '$. It follows that the projection onto $\al_{\Gamma,X}$ is given by
$N_X =\sum_{\bz \in \hat A \cap \cl_\BQ(X) } \,  e_\bz$.

Using \eqref{eq.10}, we have
\begin{align*}
N_X  &=\frac 1 {|A|} \sum_{\bz \in \hat A \cap \cl_\BQ(X) } \,  \sum_{a\in A} \bz(a^{-1}) a 
\\
&= \frac 1 {|A|} \sum_{\bz \in \hat A \cap \cl_\BQ(X) }  \sum_{b\in B}  \sum_{c\in C} \bz(b^{-1}c^{-1}) b c=  \frac 1 {|B| |C|} \sum_{\bz \in \hat A \cap \cl_\BQ(X) }  \left[ \sum_{b\in B} \bz(b^{-1}) b \right] \left[  \sum_{c\in C} \bz(c^{-1}) c\right]\\
&= \frac 1 {|B| |C|} \sum_{j=1}^r\,  \sum_{\bz\in G(\Gamma+\Lambda)}\,   \left[ \sum_{b\in B} \bz(b^{-1}) \bu_j'(b^{-1})  b \right]    \left[ \sum_{c\in C} \bz(c^{-1}) \bu_j'(c^{-1})  c \right]  \quad \text{by \eqref{eq.6}}\\
&= \frac 1 {|B| |C|} \sum_{j=1}^r\,  \sum_{\bz\in G(\Gamma+\Lambda)}\,  \left[ \sum_{b\in B}  \bu_j'(b^{-1})  b \right]  \left[ \sum_{c\in C} \bz(c^{-1}) \bu_j'(c^{-1})  c\right]  \quad \text{because $ \bz(b^{-1})=1$}  \\
&= \frac 1 {|B| |C|} \sum_{j=1}^r\,   \left[ \sum_{b\in B}  \bu_j'(b^{-1})  b  \right]  \sum_{\bz\in G(\Gamma+\Lambda)}\,    \sum_{c\in C} \bz(c^{-1}) \bu_j'(c^{-1})  c  \\
&= \frac 1 {|B|} \sum_{j=1}^r\,   \sum_{b\in B}  \bu_j'(b^{-1}) \, b \qquad \text{by \eqref{eq.5a}} \\
&= \frac 1 {|B|} \sum_{j=1}^r\,   \sum_{b\in B}  \bu_j(b^{-1})\,  b \qquad \text{by \eqref{eq.b1}} .
\end{align*}
This completes the proof of the lemma.
\end{proof}
\subsection{Upper bound for the volume of $\al_{\Gamma,X} $}
\begin{lemma}

One has
 \be  \vol(\al_{\Gamma,X}) \le (r|B|)^{r |A|/|B|}.
 \label{eq.201}
 \ee
\end{lemma}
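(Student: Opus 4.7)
The plan is to exhibit an explicit spanning set of integer vectors in the integral lattice $\al_{\Gamma,X} \cap \BZ[A]$, each with small Euclidean norm, and then apply Hadamard's inequality to a basis selected from among them. The integer vectors are obtained by clearing the denominator in the projection formula from Lemma \ref{lem.a1}: for each $a \in A$ set
\[
\phi_a \;:=\; |B|\, N_X(a) \;=\; \sum_{j=1}^r \sum_{b\in B} \bu_j(b^{-1})\, ba \;\in\; \BC[A].
\]

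First I would verify that $\phi_a \in \BZ[A]$. The coefficient of a group element $g \in A$ in $\phi_a$ vanishes unless $g = ba$ with $b \in B$, in which case it equals $\sum_{j=1}^r \bu_j(b^{-1})$; since $\{\bu_1,\dots,\bu_r\}$ is a full $\Gal(\BC/\BQ)$-orbit in $\cU^n$, this sum is (a positive integer multiple of) the field trace over $\BQ$ of the root of unity $\bu^{-\tilde b}$, hence a rational integer. Each such coefficient has absolute value at most $r$, and at most $|B|$ of them are nonzero, so
\[
\|\phi_a\|^2 \;=\; \sum_{b\in B}\Bigl|\sum_{j=1}^r \bu_j(b^{-1})\Bigr|^2 \;\le\; r^2 |B|.
\]

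Because $N_X$ is the orthogonal projection onto $\al_{\Gamma,X}$, the family $\{\phi_a : a \in A\}$ spans $\al_{\Gamma,X}$. Assuming $G(\Gamma) \cap X \neq \emptyset$, Lemma \ref{lem.a1} gives $d := \dim_\BC \al_{\Gamma,X} = r|A|/|B|$, so I may extract a linearly independent subfamily $\phi_{a_1},\dots,\phi_{a_d}$. The sublattice $L' := \BZ\phi_{a_1} + \cdots + \BZ\phi_{a_d}$ sits inside the $\BZ$-support of $\al_{\Gamma,X}$, whence $\vol(\al_{\Gamma,X}) \le \vol(L')$, and Hadamard's inequality yields
\[
\vol(L') \;\le\; \prod_{i=1}^d \|\phi_{a_i}\| \;\le\; (r\sqrt{|B|})^{d} \;\le\; (r|B|)^{r|A|/|B|},
\]
which is the desired bound. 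The only step requiring genuine attention is the integrality of $\phi_a$, a standard trace argument; the edge case $G(\Gamma) \cap X = \emptyset$ forces $\hat A \cap \cl_\BQ(X) = \emptyset$ by Galois invariance of $G(\Gamma)$, so $\al_{\Gamma,X} = 0$ and the estimate is trivial.
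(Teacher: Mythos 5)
Your proof is correct and follows essentially the same route as the paper: exhibit the integral vectors $|B|N_X a$, use the spanning/dimension count from Lemma \ref{lem.a1} to extract a basis, and bound the volume via Hadamard. Your norm estimate $\|\phi_a\|\le r\sqrt{|B|}$ (using orthogonality of the group elements $ba$ rather than the triangle inequality) is actually slightly sharper than the paper's $\le r|B|$, though both are discarded in favor of the same final bound $(r|B|)^{r|A|/|B|}$, and your explicit check of integrality and of the edge case $G(\Gamma)\cap X=\emptyset$ fills in details the paper leaves implicit.
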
 \begin{proof}
We assume that $G(\Gamma)\cap X \neq \emptyset$, because otherwise $\al_{\Gamma,X}=0$, and the statement is trivial.

From \eqref{eq.11},
 $$|B| N_X = \sum_{b \in B }\left( \sum_{j=1}^r \bu_j(b^{-1}) \right)\, b \in \BZ[B].$$

 If  $a\in A$, then from \eqref{eq.11} we have
$$| B|N_X \, a =  \sum_{j=1}^r\,   \sum_{b\in B}  \bu_j(b^{-1})\,   b a.$$
 Because $|| \bu_j (b^{-1})\,  b a ||=1$, we see that $|B| N_X\, a$ has length  $\le r |B|$.

By Lemma \ref{lem.a1},  $\al(\Gamma,X)$ has dimension $rs$.
Since $A$ spans $\BC[A]$ and $N_X$ is the projection onto $\al_{\Gamma,X}$, the set $\{ N_X \, a \mid a \in A\}$ spans $\al_{\Gamma,X}$. Hence, there are $a_1,\dots,a_{rs} \in A$ such that
$\{ N_X a_j, j=1,\dots,rs\}$ is a basis of the vector space $\al_{\Gamma,X}$. Since $|B| N_X a_j \in \BZ[A]$ is integral, $\vol(\al_{\Gamma,X})$ is less than or equal to the volume of the parallelepiped spanned
by $\{|B| N_X a_j, j=1,\dots,rs\}$. The length of each $|B| N_X a_j$ is $\le r|B|$. It follows that $\vol(\al) \le (r|B|)^{rs}= (r|B|)^{r|A|/|B|}$.
\end{proof}
\subsection{Growth of $|B|$} The following statement has been used in \cite{Le_AHG}. This is the place we use the assumption $\la \Gamma \ra \to \infty$ and $\Lambda \neq 0$.
\begin{lemma} Suppose $\Lambda$ is a non-trivial lattice.
Then
\be \lim_{\la \Gamma \ra \to \infty} |B|=\infty.
\label{eq.400}
\ee
\end{lemma}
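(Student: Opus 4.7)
The plan is to reduce to a statement about $\Lambda/(\Gamma\cap\Lambda)$ via the second isomorphism theorem and then argue by contradiction using the word metric.

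First, I would observe that since $\Gamma,\Lambda\subset\BZ^n$ are both subgroups with $\Gamma\subset\Gamma+\Lambda$, the second isomorphism theorem gives
\[
B=(\Gamma+\Lambda)/\Gamma\cong\Lambda/(\Gamma\cap\Lambda).
\]
Because $\Gamma$ has maximal rank in $\BZ^n$, the quotient $\Lambda/(\Gamma\cap\Lambda)$ embeds into $A=\BZ^n/\Gamma$ and is in particular finite, so $|B|$ is a well-defined positive integer.

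Next, I would argue by contradiction. Suppose \eqref{eq.400} fails; then there is a constant $M$ and a sequence $\Gamma_k\lhd\BZ^n$ of finite-index subgroups with $\langle\Gamma_k\rangle\to\infty$ but $|B_k|\le M$. Since $\Lambda\neq\{0\}$, pick any nonzero element $\lambda\in\Lambda$. By Lagrange's theorem applied to $\Lambda/(\Gamma_k\cap\Lambda)$, the order of the image of $\lambda$ divides $|B_k|\le M$, so
\[
M!\,\lambda\in\Gamma_k\cap\Lambda\subset\Gamma_k,
\]
and $M!\,\lambda\neq 0$ since $\lambda\neq 0$ in the torsion-free group $\BZ^n$. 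But $M!\,\lambda$ is a single fixed nonzero element of $\BZ^n$, so its word length $l_S(M!\,\lambda)\le M!\cdot l_S(\lambda)$ is a constant independent of $k$. This yields the bound $\langle\Gamma_k\rangle\le M!\cdot l_S(\lambda)$, contradicting $\langle\Gamma_k\rangle\to\infty$.

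This is the whole strategy; no step looks like a genuine obstacle, since the argument is entirely elementary and does not require the hypothesis that $\Lambda$ be primitive or of any particular rank. The only point one has to be careful about is using the non-triviality of $\Lambda$ to produce the fixed short nonzero vector $M!\,\lambda$, which is precisely where the assumption $\Lambda\neq\{0\}$ enters.
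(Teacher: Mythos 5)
Your proof is correct and uses essentially the same idea as the paper: a fixed nonzero element of $\Lambda$ must have large order in $B$ once $\langle\Gamma\rangle$ is large, because its small multiples have bounded length and hence cannot lie in $\Gamma$. The paper runs the argument directly, obtaining the explicit quantitative bound $|B|\ge\langle\Gamma\rangle/|x|$, whereas you package the same observation as a contradiction via Lagrange's theorem and $M!\lambda$; both are fine.
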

\begin{proof}  For the length of $x\in \pi=\BZ^n$ (in the definition of $\la \Gamma \ra$) we will use the standard metric $|x|$ derived from the Hermitian structure.

Fix an element $x \in \Lambda$, $x \neq 0$, and look at the degree of $x$ in $B= (\Lambda +\Gamma)/\Gamma$.
If $M |x| < \la \Gamma \ra$ for some positive integer $M$, then $M|x|$ does not belong to $\Gamma$ by the definition of $\la \Gamma \ra$, and hence $Mx$ is not 0 in $B= (\Lambda +\Gamma)/\Gamma$.
This means the cyclic subgroup of $B$ generated by $x$ has order at least $\la \Gamma \ra /|x|$.
It follows that $|B| \ge \la \Gamma \ra /|x|$.  Hence
$ \lim_{\la \Gamma \ra \to \infty } |B| = \infty$.
\end{proof}

\subsection{Growth of $\ \ker(\DD_{\Gamma,X})$}
\begin{proposition} For a fixed torsion coset $X\subset (\BC^*)^n$ and a matrix $\DD$ with entries in $\BZ[\BZ^n]$,  we have
$$ \lim_{\la \Gamma \ra \to \infty, |\BZ^n: \Gamma|< \infty} \frac {\ln \left[ \vol(\ker(\DD_{\Gamma,X}))\right]}{|\BZ^n: \Gamma| }=0.$$
\label{pro.01a}
\end{proposition}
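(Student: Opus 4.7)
The plan is to bound $\vol(\ker(\DD_{\Gamma,X}))$ from above using the identity \eqref{eq.103} applied to the integral map
$$\DD_{\Gamma,X}:\al(\Gamma,X)^k\longrightarrow \al(\Gamma,X)^l,$$
and then show that every factor in the resulting bound, after taking a logarithm and dividing by $|A|=|\BZ^n:\Gamma|$, tends to $0$ thanks to the growth $|B|\to\infty$ proved in \eqref{eq.400}.

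Concretely, I would first apply \eqref{eq.103} with $V_1=\al(\Gamma,X)^k$, $V_2=\al(\Gamma,X)^l$, $f=\DD_{\Gamma,X}$. Since $\DD_{\Gamma,X}$ is integral, the image lattice $\DD_{\Gamma,X}(V_1^{(\BZ)})$ is integral, hence has volume at least $1$. Using \eqref{eq.103a} repeatedly (or the fact that $\al(\Gamma,X)^k$ is an orthogonal direct sum of $k$ copies of $\al(\Gamma,X)$), I get $\vol(\al(\Gamma,X)^k)\le \vol(\al(\Gamma,X))^k$. Combining these gives
$$\vol(\ker \DD_{\Gamma,X})\;\le\;{\det}'(\DD_{\Gamma,X})\cdot\vol(\al(\Gamma,X))^k.$$

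Next I would estimate each factor. For the determinant, the uniform norm bound $\|\DD_\Gamma\|\le kl\,\|\DD\|_1$ from \eqref{eq.301} restricts to $\|\DD_{\Gamma,X}\|\le kl\,\|\DD\|_1$, and \eqref{eq.103s} gives
$${\det}'(\DD_{\Gamma,X})\;\le\;(kl\,\|\DD\|_1)^{l\cdot\dim \al(\Gamma,X)}\;=\;(kl\,\|\DD\|_1)^{lr|A|/|B|}$$
(using $\dim_\BC\al(\Gamma,X)=r|A|/|B|$ from \eqref{eq.201a}). For the lattice volume, \eqref{eq.201} already gives $\vol(\al(\Gamma,X))\le(r|B|)^{r|A|/|B|}$. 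Taking logarithms and dividing by $|A|$ yields
$$\frac{\ln\vol(\ker \DD_{\Gamma,X})}{|A|}\;\le\;\frac{lr}{|B|}\,\ln(kl\,\|\DD\|_1)+\frac{kr\,\ln(r|B|)}{|B|}.$$

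Finally, I would invoke \eqref{eq.400}: since $\Lambda$ is nontrivial (because $X\neq(\BC^*)^n$), $|B|\to\infty$ as $\la\Gamma\ra\to\infty$, so both terms on the right tend to $0$. The edge case $\DD_{\Gamma,X}=0$ is harmless, since then $\ker\DD_{\Gamma,X}=\al(\Gamma,X)^k$ and the same bound on $\vol(\al(\Gamma,X))^k$ suffices. The main conceptual obstacle was already dispatched earlier in the section, namely producing the volume bound \eqref{eq.201} on $\al(\Gamma,X)$ via the explicit idempotent $N_X$; what remains here is a clean assembly of those estimates with \eqref{eq.103}, so I do not expect any further difficulty beyond bookkeeping.
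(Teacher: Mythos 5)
Your proposal is correct and follows essentially the same route as the paper's proof: apply \eqref{eq.103} together with $\vol(\im \DD_{\Gamma,X}^{(\BZ)})\ge 1$ to bound $\vol(\ker\DD_{\Gamma,X})$ by $\det'\DD_{\Gamma,X}$ times the volume of the domain, then feed in \eqref{eq.103s}, \eqref{eq.301}, \eqref{eq.201a}, \eqref{eq.201}, and finish with $|B|\to\infty$ from \eqref{eq.400}. You are in fact slightly more careful than the paper with the exponents $k,l$ (the paper writes $\vol(\al)$ and $\dim\al$ where the domain and codomain are really $\al^k$ and $\al^l$) and with the degenerate case $\DD_{\Gamma,X}=0$, but since $k,l$ are fixed constants independent of $\Gamma$ these refinements do not change the limit.
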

\begin{proof} To simplify notations we will write $\al= \al_{\Gamma,X}$.  By \eqref{eq.103},

$$   \vol(\ker \DD_{\Gamma,X} ) \, \vol(\im \DD_{\Gamma,X} ^{(\BZ)}) = {\det}' \DD_{\Gamma,X} \, \vol(\al).$$
Because $\vol(\im \DD_{\Gamma,X} ^{(\BZ)}) \ge 1$, we have
\begin{align*}
\vol(\ker \DD_{\Gamma,X} ) & \le {\det}' \DD_{\Gamma,X} \, \vol(\al)  \\
&\le ||\DD_{\Gamma,X}|| ^ {\dim (\al)} \vol(\al)   \quad \text{by \eqref{eq.103s}}\\
&\le   ||\DD_{\Gamma,X} || ^ {r |A|/|B| }  (r |B|)^{r |A|/|B|} \quad \text{by \eqref{eq.201a} and \eqref{eq.201}}
\end{align*}
Since $\DD_{\Gamma,X}$ is a restriction of $\DD_\Gamma$ on a subspace, we have
$$ ||\DD_{\Gamma,X}|| \le ||\DD_{\Gamma}|| \le kl ||\DD||_1,$$
where the second inequality is \eqref{eq.301}.

It follows that
$$ \vol(\ker \DD_{\Gamma,X} ) \le (klr ||\DD||_1 |B|)^{r |A|/|B|},$$
and
\be
 \frac {\ln \left[ \vol(\ker(\DD_{\Gamma,X}))\right]}{|A| }\le  \frac {r \ln (klr ||\DD||_1 |B|)}{|B| }.
 \label{eq.202}
 \ee
Because $|B| \to \infty$ as $\la \Gamma \ra \to \infty$ by \eqref{eq.400}, the right hand side of \eqref{eq.202} goes to 0 as $\la \Gamma \ra \to \infty$.
\end{proof}

\section{Proofs of Theorems \ref{thm.01} and \ref{thm.01a}}\label{sec.proofs}
It is clear that Theorem \ref{thm.01} follows from Theorem \ref{thm.01a} and Identity \eqref{eq.e55}. We will prove Theorem \ref{thm.01a} in this section.

\subsection{Preliminaries}

Recall that $\cC$ is a finitely-generated based free $\BZ[\BZ^n]$-complex
$$
0 \to C_m \overset {\partial_m}\longrightarrow C_{m-1} \overset {\partial_{m-1}}\longrightarrow C_{m-2} \dots \overset {\partial_2}\longrightarrow C_{1}\overset {\partial_1}\longrightarrow C_{0}\to 0.
$$
Using the bases of $C_j$'s 
we identify $C_j$ with $\BZ[\BZ^n]^{b_j}$ and  $\partial_j$ with a $b_j \times b_{j-1}$ matrix with entries in $\BZ[\BZ^n]=\BZ[t_1^{\pm 1},\dots, t_n^{\pm 1}]$.

For $f=\sum a_j g_j\in \BZ[\BZ^n]$, where $a_j \in \BZ$ and $g_j \in \BZ^n$ let
$f^*= \sum a_j g_j^{-1}$. If $f$ acts on $\ell^2(\BZ^n)$, the Hilbert space with basis $\BZ^n$, by multiplication, then $f^*$ is  the adjoint operator of $f$.
As usual, for a matrix $O=(O_{ij})$ with entries in $\BZ[\BZ^n]$ let the {\em adjoint $O^*$} \ be defined by $(O^*)_{ij} = (O_{ji})^*$.

According to the assumption of Theorem \ref{thm.01a}, 
\be
H_k(\cC\otimes _\BZ[\BZ^n] F)=0, \label{eq.45}
\ee
 where $F$ is the fractional field of $\BZ[\BZ^n]$. 
Let 
$$\DD = \partial_k ^* \partial _k + \partial_{k+1} \partial _{k+1}^*: C_k \to C_k.$$
Then \eqref{eq.45}  is equivalent to the fact that $\DD$ is an injective map,
or that $\det(\DD)\neq 0$. Here $\det(\DD)$ is the usual determinant of a square  matrix with entries in a commutative ring. In our case $\det (\DD)$ is a Laurent polynomial in
$t_1,\dots, t_n$.

Let $Z$ be the zero set of the Laurent polynomial $\det(\DD)$ i.e. $Z =\{ \bz \in \BC^n \mid  \det(\DD)(\bz)=0\}$. In other words, $Z$ is the set of $\bz\in \BC^n$ such that the square matrix $\DD(\bz)$
is singular. Since $\det(\DD)\neq0$, $Z$ is not the whole $\BC^n$.

For every subgroup $\Gamma \le \BZ^n$, $\DD$ induces a map
$ \DD_\Gamma: \BC[A]^{b_k} \to \BC[A]^{b_k}$, where $A = \BZ^n/\Gamma$.

\subsection{Growth of $\vol(\ker(\DD_\Gamma))$}

\begin{proposition} In the above setting, we have
\label{pro.fi}
\be
\lim_{\la \Gamma  \ra \to \infty, |\pi:\Gamma | < \infty}\frac{\ln [\vol(\ker(\DD_\Gamma ))]}{|\pi:\Gamma |} =0.
\label{eq.7}
\ee
\end{proposition}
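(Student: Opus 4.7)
The plan is to decompose $\ker(\DD_\Gamma)$ along the characters of $A = \BZ^n/\Gamma$, use the Manin--Mumford description of torsion points on $Z = \{\det(\DD)=0\}$ to cover this kernel by finitely many pieces $\ker(\DD_{\Gamma,X_j})$, and then invoke Proposition \ref{pro.01a} for each piece together with the submultiplicativity \eqref{eq.103a} of the volume.

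\textbf{Step 1: Localise $\ker(\DD_\Gamma)$ on the variety $Z$.} Using the character decomposition \eqref{e133}, $\DD_\Gamma$ splits as $\bigoplus_{\bz \in G(\Gamma)} \DD(\bz)$, and $\DD(\bz)\colon (\BC e_\bz)^{b_k}\to(\BC e_\bz)^{b_k}$ is singular iff $\bz \in Z$. Hence every element of $\ker(\DD_\Gamma)$ is supported on $G(\Gamma)\cap Z \subset \cU^n \cap Z$.

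\textbf{Step 2: Apply Manin--Mumford.} Since $\DD$ has entries in $\BZ[\BZ^n]$, $\det(\DD)\in\BZ[\BZ^n]$ is nonzero, so $Z$ is an algebraic subset of $\BC^n$ defined over $\BQ$ with $Z\neq\BC^n$. Proposition \ref{p.10} gives finitely many maximal torsion cosets $X_1,\dots,X_q\subset Z$ with $Z\cap\cU^n = \bigcup_j (X_j\cap\cU^n)$. Crucially, $X_j\neq(\BC^*)^n$ for all $j$, since $(\BC^*)^n$ is Zariski-dense in $\BC^n$ and $Z\neq\BC^n$. So each $X_j$ meets the hypothesis of Proposition \ref{pro.01a}.

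\textbf{Step 3: Cover the kernel by the $\ker(\DD_{\Gamma,X_j})$.} For $v=\sum_{\bz}v_\bz e_\bz\in\ker(\DD_\Gamma)$, each nonzero $v_\bz$ has $\bz\in G(\Gamma)\cap Z$, so $\bz\in X_{j}\subset\cl_\BQ(X_j)$ for some $j$. Grouping terms by $j$ expresses $v$ as a sum of vectors lying in the various $\al(\Gamma,X_j)^{b_k}$, each still in the kernel. Therefore
\[
\ker(\DD_\Gamma) \;\subseteq\; \sum_{j=1}^{q} \ker(\DD_{\Gamma,X_j}).
\]
All subspaces involved are defined over $\BQ$ (the $\al(\Gamma,X_j)$ by construction, the kernels because $\DD_\Gamma$ is integral), so their volumes are well defined.

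\textbf{Step 4: Combine volume bounds.} Iterating \eqref{eq.103a} gives
\[
\vol(\ker(\DD_\Gamma)) \;\le\; \prod_{j=1}^{q} \vol(\ker(\DD_{\Gamma,X_j})),
\]
hence $\ln\vol(\ker(\DD_\Gamma))/|A| \le \sum_j \ln\vol(\ker(\DD_{\Gamma,X_j}))/|A|$. By Proposition \ref{pro.01a} each summand tends to $0$ as $\la\Gamma\ra\to\infty$, and $q$ is independent of $\Gamma$, so the right-hand side tends to $0$. Since $\ker(\DD_\Gamma)$ is spanned over $\BC$ by its intersection with the integral lattice $\BZ[A]^{b_k}$, it is an integral lattice and $\vol(\ker(\DD_\Gamma))\ge 1$, so the limit is pinched to $0$.

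\textbf{Main obstacle.} The nontrivial input is the application of the Manin--Mumford principle in Step 2: it is precisely what converts the \emph{infinitely many} torsion points of $G(\Gamma)\cap Z$ into a \emph{uniformly finite} (in $\Gamma$) list of cosets $X_1,\dots,X_q$. Without this uniformity the sum in Step 4 could not be controlled as $\Gamma$ varies. A secondary subtlety is verifying that all pieces appearing in the decomposition are defined over $\BQ$ so that \eqref{eq.103a} and the hypotheses of Proposition \ref{pro.01a} genuinely apply; this follows from the integrality of $\DD$ and the Galois-invariance built into the definition of $\al(\Gamma,X_j)$.
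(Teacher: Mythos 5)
Your proof is correct and follows essentially the same route as the paper's: character decomposition of $\DD_\Gamma$, Manin--Mumford (Proposition \ref{p.10}) to reduce the singular support $G(\Gamma)\cap Z$ to finitely many torsion cosets $X_j$, the covering $\ker(\DD_\Gamma)\subseteq\sum_j\ker(\DD_{\Gamma,X_j})$, and then \eqref{eq.103a} together with Proposition \ref{pro.01a}. Your explicit remark that $\vol(\ker(\DD_\Gamma))\ge 1$ (so the limit is pinched from below as well) is a detail the paper leaves implicit and is a worthwhile addition.
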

\begin{proof}  By Proposition \ref{p.10}, there are in total a finite number of maximal torsion cosets $X_j \subset Z$,
sets $j=1,\dots,q$, and
$$
Z  \cap \cU^n = \bigcup_{j=1}^q (X_j \cap \cU^n).
$$
Since $Z\neq \BC^n$,  none of the lattices associated to the torsion cosets $X_j$ is trivial.

Because $Z$ is defined over $\BQ$, any Galois conjugate of $X_j$ is also a maximal torsion coset in $Z$, i.e. among $\{X_1,\dots,X_q\}$.

Since $G(\Gamma) \subset \cU^n$, one has
$$ Z  \cap G(\Gamma)= (Z  \cap \cU^n)\cap G(\Gamma) =   \bigcup_{j=1}^q (X_j \cap \cU^n) \cap G(\Gamma)  =  \bigcup_{j=1}^q ( X_j \cap G(\Gamma)).
$$
Because the Galois conjugate of $X_j$'s are among the $X_j$'s, we also have

\be Z  \cap G(\Gamma)= \bigcup_{j=1}^q ( \cl_\BQ(X_j) \cap G(\Gamma)).
\label{eq.10d}
\ee
Let
$$ \al := \bigoplus_{\bz \in  G(\Gamma) \cap Z  } \BC e_\bz, \quad \al^\perp:= \bigoplus_{\bz \in G(\Gamma) \setminus Z } \BC e_\bz .$$
Then $\BC[A]= \al \oplus \al^\perp$, and each of $\{ \al, \al^\perp\}$ is an $A$-subspace of $\BC[A]$.
The linear operator $\DD_\Gamma$ restricts to
$$ \DD_{\Gamma,\al}: \al^{b_k} \to \al^{b_k} \quad \text{and} \quad \DD_{\Gamma,\al}^\perp : (\al^\perp)^{b_k} \to (\al^\perp)^{b_k},$$
and $$\DD_\Gamma = \DD_{\Gamma, \al} \oplus \DD_{\Gamma,\al}^\perp,$$
where
\be
\DD_{\Gamma, \al} = \bigoplus_{\bz \in  G(\Gamma) \cap Z  } \DD(\bz), \quad 
\DD_{\Gamma, \al}^\perp  = \bigoplus_{\bz \in  G(\Gamma) \setminus Z  } \DD(\bz).
\label{eq.65}
\ee
When $\bz \not \in Z$, $\DD(\bz)$ is non-singular. It follows that $\DD_{\Gamma,\al}^\perp$ is non-singular.
Hence,
\begin{align}
\ker(\DD_\Gamma) & = \ker(\DD_{\Gamma,\al})  \notag \\
& = \bigoplus_{\bz \in  G(\Gamma) \cap Z  } \ker(\DD(\bz)) \quad \text{by    \eqref{eq.65} }  \notag \\
&= \sum_{j=1}^q  \left (  \bigoplus_{\bz \in  \cl_\BQ(X_j) \cap G(\Gamma)   } \ker(\DD(\bz))    \right) \quad \text{by    \eqref{eq.10d} }  
\label{eq.60a}
\end{align}

From \eqref{eq.10d} we have
$$
\al = \sum_{j=1}^q   \left[ \bigoplus_{\bz \in  G(\Gamma) \cap \cl_\BQ(X_j ) }  \BC e_\bz \right]     =   \sum_{j=1}^q  \al(\Gamma,X_j),
$$
where $\al(\Gamma,X_j)$ is defined as in Section \ref{sec.Xj}. One also has
\be
 \ker(\DD_{\Gamma,X_j})= \bigoplus_{\bz \in  G(\Gamma) \cap \cl_\BQ(X_j ) } \ker(\DD(\bz)).
 \label{eq.60b}
 \ee
From \eqref{eq.60a} and \eqref{eq.60b}, one has

 $$ \ker (\DD_{\Gamma}) =\sum_{j=1}^q \ker(\DD_{\Gamma,X_j}).$$

Hence, by \eqref{eq.103a},
$$ \vol(\ker(\DD_\Gamma )) \le   \prod_{j=1}^q \vol(\ker(\DD_{\Gamma,X_j})).$$
Now  \eqref{eq.7} follows from the above inequality and  Proposition \ref{pro.01a}.
\end{proof}

\subsection{Proof of Theorem \ref{thm.01a}} Applying Proposition \ref{pro.01} to the $\BZ$-complex $\cE=\cC_\Gamma$, we get
$$ \vol(\ker(\DD_\Gamma)) \ge R_k(\cC_\Gamma)) \ge \frac{1}{\vol(\ker(\DD_\Gamma))}.$$

Thus we have

$$   \frac{\ln ( \vol(\ker(\DD_\Gamma)))}{ |\BZ^n : \Gamma|} \ge   \frac{\ln (R_k(\cC_\Gamma))}{ |\BZ^n : \Gamma|}\ge   -\frac{\ln ( \vol(\ker(\DD_\Gamma)))}{ |\BZ^n : \Gamma|}.$$
By Proposition \ref{pro.fi}, the limits of the two boundary terms, when $\la \Gamma \ra \to \infty$ and $ |\BZ^n: \Gamma| < \infty$,  are 0. Hence we also have
$$ \lim_{\la \Gamma \ra \to \infty, |\BZ^n: \Gamma| < \infty}\frac{\ln (R_k(\Gamma))}{ |\BZ^n : \Gamma|}=0.$$
This completes the proof of Theorem \ref{thm.01a}.

\end{document}